\documentclass[12pt]{amsart}
\usepackage[a4paper, left=3.1cm, right=3.1cm, bottom=3cm,top=3cm]{geometry}
\usepackage{mathscinet}
\usepackage{amsmath, amsfonts, amssymb, amsthm}
\usepackage{xcolor}
\definecolor{ocean}{rgb}{0,0.5,0.5}
\definecolor{blue}{rgb}{0.00,0.26,0.50}
\usepackage[colorlinks = true, linkcolor=ocean, citecolor=blue]{hyperref}
\newtheorem{theorem}{Theorem}[section]
\newtheorem{proposition}[theorem]{Proposition}
\newtheorem{definition}[theorem]{Definition}
\newtheorem{remark}[theorem]{Remark}

\newtheorem{question}[theorem]{Question}

\newtheorem{lemma}[theorem]{Lemma}

  \def\C{\mathbb C}
\def\diag{{\rm diag}}
\def\N{\mathbb N}
\def\C{{\mathbb C}}

\def\GL{\mathrm {GL}}

\def\<{\,<\!}
\def\>{\!>\,}

\def\C{\mathbb{C}}

\def\N{\mathbb{N}}

\begin{document}

\title[Polynomial Maps with Constants over Division Algebras]{Polynomial Maps with Constants over Division Algebras and the Generalized Kaplansky--L'vov Conjecture}
	
	
\author{Archit Gangwal, Arunava Mandal, and Somesh Verma}

\address{Department of Mathematics,
Indian Institute of Technology Roorkee,
Uttarakhand 247667, India}
\email{arcgang122@gmail.com}

\address{Department of Mathematics,
Indian Institute of Technology Roorkee,
Uttarakhand 247667, India}
\email{arunava@ma.iitr.ac.in}

\address{Department of Mathematics,
Indian Institute of Technology Roorkee,
Uttarakhand 247667, India}
\email{somesh.verma.contact@gmail.com}

\begin{abstract}
The Kaplansky--L'vov conjecture asserts that the image of a multilinear polynomial map on a full matrix algebra over a field is always a vector space. Although the conjecture remains open in general, substantial progress has been made for $2\times 2$ and $3\times 3$ matrix algebras over various fields. Recently, Panja, Saini, and Singh formulated a generalized Kaplansky–L'vov conjecture for polynomial maps with matrix coefficients over algebraically closed fields and verified it for $2\times 2$ matrices.
In this work, we investigate an analogous problem for polynomial maps with constant matrix coefficients over an infinite division algebra. Specifically, we consider polynomials in the free algebra $M_2(\mathbb D)\langle x_1,\ldots,x_m\rangle$ of the form
$\omega = A_1x_1^{k_1}+\cdots+A_mx_m^{k_m},$ where the $A_1,\ldots, A_m\in M_2(\mathbb D)$ are fixed matrices, $\mathbb D$ is an infinite division algebra, and $k_1,\ldots, k_m$ are positive integers. We prove that the corresponding generalized Kaplansky--L'vov conjecture holds for $2\times 2$ matrices over $\mathbb R$ and the quaternion division algebra $\mathbb H$. We also investigate the surjectivity of these polynomial maps. This can be viewed as a generalized Waring problem for matrix algebras.
\end{abstract}
	
\maketitle
  \subjclass{\it 2020 Mathematics Subject Classification:} Primary 16K20, 16S50; Secondary 15A24, 11P05.
  \keywords{\bf Keywords:} Polynomial Maps with Constant,  Division algebra.
  
\setcounter{tocdepth}{1}
\tableofcontents
    
\section{Introduction}
A central theme in the study of polynomial maps is understanding the set of values they attain. This seemingly simple question lies at the crossroads of several areas of mathematics, including algebraic geometry, the theory of polynomial identities, and Waring-type problems. The study of polynomial images on algebras has received significant attention in recent years \cite{ASL12, KG13, ASL16, NKE18, ASL20, M20, GC22, GC23, MP23}, and see the references cited therein. Much of the related work is motivated by the well-known Kaplansky-L'vov conjecture \cite{I57}, which says that the image of a multilinear polynomial on the full matrix algebra $M_n(\mathbb K)$ over a field $\mathbb K$ is a vector space, as well as by the broader expectation that phenomena in simple algebras should parallel those in (finite) simple groups.
The analogous problem in group theory, concerning images of word maps, has also been extensively studied in recent times, though it dates back to the work of Borel \cite{B83}, who proved that for any simple algebraic group $G$ over algebraically closed field, the image, denoted by $w(G)$, of any nontrivial word $w:G^n\to G$ is dense in $G$, in particular $G=w(G)^2$. For connected simple Lie group $G$, it is classified when the images of the $k$-th power map $P_k$, given by $g\mapsto g^k$ for all $g\in G$, is dense \cite{BM18}, \cite{Ma18}, and see \cite{M21}, \cite{MS21}, \cite{MP26} for general Lie groups. W\"ustner proved that for a connected Lie group $G$ and an integer $k\geq 2$, every element $g\in G$ can be written as $g=a^kb^k$ for some $a, b\in G$, in particular, $G=P_k(G)^2$ \cite{W03}. 
Larsen, Shalev, and Tiep showed that every element of a finite simple group is a product of two values of $w$ \cite{LST11}. 
Following Larsen's suggestion, analogous questions were studied for matrix algebras. It was shown that, for sufficiently large $q$, every matrix in $M_n(\mathbb F_q)$ is a sum of two $k$-th powers, equivalently, the map $x^k+y^k$ is surjective on $M_n(\mathbb F_q)$ \cite{KA25}. This was later extended to polynomial maps of the form $\sum_{i=1}^m \delta_i x_i^{k_i}$ over algebras $M_n(\mathbb D)$, where $\mathbb{D}=\mathbb{C}, \mathbb{R}$, $\mathbb {F}_q$ and $\mathbb{H}$, and $\delta_i\in\mathbb D$ are fixed elements \cite{SPA25}.
These results belong to the broader class of Waring-type problems, which ask whether every element of the algebra generated by the image of a polynomial map can be expressed as a bounded sum of values of that map. Such problems have been investigated in a variety of algebraic settings, including central simple algebras, matrix algebras over rings \cite{KG13}, and upper triangular matrix algebras \cite{SS23}. A parallel conjecture to Kaplansky–L'vov  conjecture for upper triangular matrix algebras, known as the Fagundes--Mello conjecture, has also been resolved in \cite{GT22} and \cite{LW22}.
On the other hand, the Kaplansky–L'vov  conjecture remains open in its general form,  with complete answers currently restricted to small matrix sizes. Kanel--Belov, Malev, and Rowen settled the case $n = 2,$ with mild hypotheses on the field \cite{ASL12}, and the case $n = 3$ in subsequent work \cite{ASL16}. 
An overview of the conjecture is given in the survey of Kanel--Belov, Malev, Rowen, and Yavich \cite{ASL20}.
More recently, Panja, Saini, and Singh proposed a generalized Kaplansky-L'vov conjecture for polynomial maps with matrix coefficients over algebraically closed fields and established it for $2\times2$ matrices \cite{PSS25}, and some partial results by Saini and Singh for $3\times 3$ matrices, and some remarks for general $n\times n$ matrices \cite{SS26}.
This naturally leads to the following question.

\begin{question}\label{Q1}
Let $\mathbb D$ be a division algebra over an infinite field, and let $A,B\in M_n(\mathbb D)$ be nonzero matrices. Determine necessary and sufficient conditions under which the polynomial map $\omega(x,y)=Ax^k+By^\ell$
is surjective on $M_n(\mathbb D)$, and characterize when the image of $\omega$ is a vector space.
\end{question}

Let $\mathbb D$ be a division algebra with center $\mathbb K$, let $\mathcal A$ be a $\mathbb K$-algebra, and let
$F_n=\mathbb K\langle x_1,\dots,x_n\rangle$ be the free associative algebra on $n$ variables. Every element $w\in F_n$ induces, by evaluation, a polynomial map
$w:\mathcal A^n\to\mathcal A.$
More generally, elements of the free product $\mathcal A*F_n=\mathcal A\langle x_1,\cdots,x_n\rangle$, called \emph{generalized polynomials}, induce maps $\widetilde{\omega}:\mathcal A^n\to\mathcal A$ through evaluation (cf. \cite{PSS25}). Following the terminology of word maps with constants in group theory \cite{NKE18}, we call these maps \emph{polynomial maps with constants}. 
Throughout this note, we are primarily interested in the case $\mathcal A=M_n(\mathbb D)$. Typical examples of polynomial maps with constants include
$\omega(x_1,\ldots,x_m)
    =A_1x_1^{k_1}+\cdots+A_mx_m^{k_m},
\; A_i\in M_n(\mathbb D),$
and
$\omega(x,y)=Axy-Byx,
\; A,B\in M_n(\mathbb D).$
The first may be viewed as a generalized diagonal map, while the second is a generalized commutator-type map.
In this note, we answer Question \ref{Q1} for $2\times2$ matrices over $\mathbb{R}$ and the quaternion algebra $\mathbb{H}$. 

To state our result, we first note the following.
Let $\mathbb D\in\{\mathbb R,\mathbb H\}$ and  $A\in M_2(\mathbb D)$ be a non-zero non-invertible
matrix. Then, $A$ takes one of
the following two forms:
\begin{equation}\label{eq:intro-ratio-form}
   A \;=\;
   \begin{pmatrix}
      a_1 & a_2 \\ ta_1 & ta_2
   \end{pmatrix}
   \qquad\text{or}\qquad
   A \;=\;
   \begin{pmatrix}
      0 & 0 \\ a_1 & a_2
   \end{pmatrix},
\end{equation}
where $a_1,a_2\in\mathbb D$ are not both zero and $t\in\mathbb D$. See Section \ref{section-preliminaries} for a proof when $\mathbb D=\mathbb H$.

\begin{definition}[Row dependency ratio]\label{def:intro-rdr}
Let $\mathbb D\in\{\mathbb C, \mathbb R,\mathbb H\}$, let $A\in M_2(\mathbb D)$ be a nonzero non-invertible
matrix written in the form~\eqref{eq:intro-ratio-form}. We say, the
\emph{row dependency ratio} of $A$ denoted by $\rho(A)$, is the element
of $\mathbb D\cup\{\infty\}$, defined by
\[
   \rho(A) \;=\;
   \begin{cases}
      t & \text{if $A$ has the first form in~\eqref{eq:intro-ratio-form},}\\
      \infty & \text{if $A$ has the second form in~\eqref{eq:intro-ratio-form}
                       (i.e. has zero first row).}
   \end{cases}
\]
\end{definition}

\begin{theorem}\label{thm:image-phi}
Let $\mathbb D\in\{\mathbb R, \mathbb C, \mathbb H\}$, let $A\in M_2({\mathbb D})$ be a nonzero non-invertible
matrix, and let $k\in\mathbb N$. Let $\phi_{A,k}:M_2(\mathbb D)\to M_2(\mathbb D)$ be a map given by $X\to AX^k$. 
Then
${\rm Im}(\phi_{A,k})=
    \{\,M\in M_2(\mathbb{D})\setminus{\rm GL}_2(\mathbb D) : \rho(M)=\rho(A)\,\}\cup\{\theta\},$ where $\theta$ is the zero matrix in $M_2(\mathbb D)$.
\end{theorem}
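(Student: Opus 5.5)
The proof will show two inclusions. Throughout, I use the row-space description of $\rho$. A nonzero non-invertible $M\in M_2(\mathbb D)$ with $\rho(M)=t\in\mathbb D$ is exactly a matrix of the form $M=\begin{pmatrix}1\\ t\end{pmatrix}v$ for a nonzero row vector $v\in\mathbb D^2$. Indeed, the second row equals $t$ times the first, with $t$ multiplying on the left. Likewise $\rho(M)=\infty$ means $M=\begin{pmatrix}0\\ 1\end{pmatrix}v$ with $v\neq 0$. So $\rho(M)$ records the left column $c_M\in\{(1,t)^T,(0,1)^T\}$ in a rank-one factorization $M=c_Mv$. Over $\mathbb H$ I will rely on the fact, proved in Section~\ref{section-preliminaries}, that every nonzero non-invertible matrix has the form \eqref{eq:intro-ratio-form}. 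In particular it has rank one in the sense that its rows are left-$\mathbb D$-dependent.

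For the inclusion $\subseteq$, write $A=c_A a$, where $a=(a_1,a_2)$ is a nonzero row vector and $c_A$ is the column determined by $\rho(A)$. Then for any $X$ we have $AX^k=c_A\,(aX^k)$, and $aX^k$ is a row vector $w$. If $w=0$ the product is $\theta$. Otherwise $AX^k=c_Aw$ has second row $t$ times its first row (or a zero first row in the $\infty$ case). It is therefore nonzero, non-invertible, and has $\rho(AX^k)=\rho(A)$. This direction is pure bookkeeping with left scalars, taking care that $t$ multiplies on the left, which matters for $\mathbb H$.

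For the inclusion $\supseteq$, the point $\theta$ is attained at $X=\theta$. Now let $M$ be nonzero non-invertible with $\rho(M)=\rho(A)$, so $M=c_Aw$ with $w\neq 0$. It suffices to find $X$ with $aX^k=w$. The key step is the following claim: for any two nonzero row vectors $a,w\in\mathbb D^2$ and any $k\ge1$, there is $X\in M_2(\mathbb D)$ with $aX^k=w$.

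To prove the claim, I first choose $P\in\GL_2(\mathbb D)$ with $aP=e_1=(1,0)$, and $Q\in\GL_2(\mathbb D)$ with $e_1Q=w$. Such matrices exist by completing a nonzero vector to a basis of $\mathbb D^2$ as a right vector space. Next I look for $X=PYP^{-1}$, so that $aX^k=e_1Y^kP^{-1}$. The claim then reduces to finding $Y$ with $e_1Y^k=wP$, that is, to showing that the first row of $Y^k$ can be any prescribed nonzero row vector $u=(u_1,u_2)$.

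I expect this reduced problem to be the only real obstacle, because $k$-th roots need not exist in $\mathbb R$. It splits into two cases.
\begin{itemize}
\item If $u_2\neq 0$, take $Y$ to be a companion-type matrix $\begin{pmatrix}0&1\\ 0&0\end{pmatrix}$-like nilpotent or cyclic matrix. For example, for $k=1$ take $Y$ with first row $u$. For $k\ge2$, try $Y=\begin{pmatrix}\alpha&\beta\\ 1&0\end{pmatrix}$ or an upper-triangular $Y=\begin{pmatrix}\lambda&\mu\\0&\nu\end{pmatrix}$ with $\lambda=1$ and $\nu=0$. This gives first row $(1,\mu)$ of $Y^k$, up to the choice of $\lambda$.
\item More uniformly, take $Y=\begin{pmatrix}\lambda&\mu\\0&0\end{pmatrix}$. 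Then $Y^k=\lambda^{k-1}Y$, so $e_1Y^k=(\lambda^k,\lambda^{k-1}\mu)$. This handles every $u$ with $u_1$ a $k$-th power. If $u_1$ is not a $k$-th power (for example $\mathbb R$, $k$ even, $u_1<0$), or if $u_1=0$, I instead use $Y=\begin{pmatrix}\lambda&\mu\\ \sigma&\tau\end{pmatrix}$ built as $Y=\begin{pmatrix}1&0\\ s&1\end{pmatrix}D\begin{pmatrix}1&0\\ s&1\end{pmatrix}^{-1}$ with $D$ diagonal, or a rotation-type matrix $\begin{pmatrix}\cos&-\sin\\ \sin&\cos\end{pmatrix}$ scaled by $r$. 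This last choice gives $e_1Y^k=r^k(\cos k\theta,-\sin k\theta)$, which covers every nonzero real vector.
\end{itemize}
Over $\mathbb C$ and $\mathbb H$, every element has $k$-th roots, so the triangular choice already suffices when $u_1\neq0$. When $u_1=0$, I conjugate by a suitable elementary matrix to move to $u_1\neq0$, which is legitimate because the claim is invariant under the $P$-reduction above.
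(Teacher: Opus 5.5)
Your argument is correct, but for the inclusion $\supseteq$ it takes a different route from the paper. The inclusion $\subseteq$ is the same observation in both: right multiplication preserves the left factor $c_A$.

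Both proofs face the same row-vector problem. With $A=c_Aa$ and $C=c_Aw$, the equation $AX^k=C$ is just $aX^k=w$.

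\emph{The paper's route.} It solves this directly in coordinates, splitting according to which of $a_1,a_2$ vanish. For $a_1a_2\neq0$ it takes a diagonal $X$ built from scalar $k$-th roots. Otherwise it writes down an \emph{invertible} $M$ with $aM=w$ and extracts a $k$-th root of $M$. Over $\mathbb H$ this needs the nontrivial Lemma~\ref{lem:invertible-kth-root} from \cite{SPA25}, and over $\mathbb R$ it needs Lemma~\ref{lem:image-power-map-real}. The real case with $k$ even then requires a separate trace--determinant construction (Part~II).

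\emph{Your route.} You normalize $a$ to $e_1$ via $X=PYP^{-1}$, so only the first row of $Y^k$ matters. You realize any nonzero first row $u$ with the rank-one matrix $\begin{pmatrix}\lambda&\mu\\0&0\end{pmatrix}$, which needs only scalar roots (Lemma~\ref{lem:roots-H}). Over $\mathbb R$ you use scaled rotations, which handle every $k$ and every $u$ at once.

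\emph{Comparison.} Your approach is more uniform and self-contained: it uses no matrix $k$-th roots and needs no parity split. The paper's approach has the modest advantage of giving $X$ by explicit formulas in the entries of $A$ and $C$, without a change of basis.

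\emph{Minor points.} Your first bullet is muddled and unnecessary, since the second bullet carries the whole argument. The case $u_1=0$ needs no conjugation: $Y=\begin{pmatrix}0&u_2\\0&1\end{pmatrix}$ is idempotent and has first row $(0,u_2)$. Finally, $P$ should come from completing $a$ to a basis of rows as a \emph{left} $\mathbb D$-space. Over $\mathbb H$, invertibility of a matrix is equivalent to left, not right, independence of its rows. An explicit choice such as $P=\begin{pmatrix}a_1&a_2\\0&1\end{pmatrix}^{-1}$ for $a_1\neq0$ makes this moot.
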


\begin{theorem}\label{thm-surjectivity}
Let $\mathbb{D}\in\{\mathbb{R},\mathbb{H}\}$, and let
$A,B\in M_2(\mathbb{D})$
be nonzero matrices with integers $k_1,k_2\ge 1$. Let
$\omega:M_2(\mathbb{D})^2\to M_2(\mathbb{D}),$ be the polynomial map with constant $A$ and $B$, given by $\omega(X,Y)=AX^{k_1}+BY^{k_2}.$
Then $\omega$ is surjective if and only if one of the following conditions holds:
\begin{enumerate}
    \item at least one of $A$ or $B$ is invertible;
    \item $A$ and $B$ are both non-invertible and
    $\rho(A)\neq \rho(B),$ where $\rho(\cdot)$ denotes the row dependency ratio.
\end{enumerate}
Equivalently, ${\rm Im}(w)\neq M_2(\mathbb D)$ if and only if both $A$ and $B$ are non-invertible, 
$\rho(A)=\rho(B),$ and
in this case $\operatorname{Im}(\omega)=
\{\,M\in M_2(\mathbb{D})\setminus{{\rm GL}_2(\mathbb D)} : \rho(M)=\rho(A)\,\}\cup\{\theta\},$ where $\theta$ is the zero matrix in $M_2(\mathbb D)$.
\end{theorem}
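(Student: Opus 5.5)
We prove the statement by splitting into three cases, using Theorem~\ref{thm:image-phi} throughout.

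\textbf{Case 1: $A$ (say) is invertible.} Given $M\in M_2(\mathbb D)$, it suffices to find $X$ and $Y$ with $X^{k_1}=A^{-1}(M-BY^{k_2})$. The key input is that every matrix in $M_2(\mathbb D)$ is a sum of a $k_1$-th power and something we can absorb, but the simplest route avoids this: choose $Y$ so that $N:=A^{-1}(M-BY^{k_2})$ is a $k_1$-th power. Over $\mathbb R$ and $\mathbb H$, every matrix similar to a diagonal matrix with suitable entries is a $k$-th power. Concretely, every matrix with two distinct nonzero real (resp.\ noncentral-equivalence-class) eigenvalues that are themselves $k_1$-th powers is a $k_1$-th power. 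For odd $k_1$ any diagonalizable matrix with real eigenvalues works, and in $\mathbb H$ every element has $k$-th roots.

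So we take $Y=\lambda I$ with $\lambda\in\mathbb R$. Then $N(\lambda)=A^{-1}M-\lambda^{k_2}A^{-1}B$ is an affine family in the real parameter $s=\lambda^{k_2}$. We choose $s$ so that $N(s)$ is a power, for instance so that its characteristic data has distinct positive eigenvalues. When $A^{-1}B$ is not scalar this follows from a discriminant/trace argument. When $A^{-1}B=cI$ with $c\in\mathbb R$, we instead take $Y=\mu P$ with a cleverly chosen $P$, or use $Y$ general and note that $BY^{k_2}$ then ranges over a set large enough to make $N$ land in the (dense, open) set of matrices with distinct positive real eigenvalues. This openness and density argument is the main obstacle. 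I would handle it by showing that $\{X^{k_1}\}$ contains a nonempty open set $U$ (diagonalizable with distinct positive eigenvalues, via the inverse function theorem) and that $\{A^{-1}M-A^{-1}BY^{k_2}\}$ meets $U$, since $Y^{k_2}$ ranges over a set containing matrices near $tI$ for large $t>0$, which pushes the sum into $U$. Over $\mathbb H$ we use the analogous statement for the complex adjoint representation.

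\textbf{Case 2: both $A$ and $B$ are singular with $\rho(A)\ne\rho(B)$.} By Theorem~\ref{thm:image-phi}, $AX^{k_1}$ ranges over all matrices whose rows are in the ratio $\rho(A)$, i.e.\ with first row $r_1$ and second row $t r_1$ for arbitrary $r_1\in\mathbb D^2$ (or zero first row and arbitrary second row if $\rho(A)=\infty$). Similarly, $BY^{k_2}$ covers all matrices with rows $(r_1', t' r_1')$. Given $M$ with rows $(m_1,m_2)$, we solve $r_1+r_1'=m_1$ and $tr_1+t'r_1'=m_2$. This gives $(t'-t)r_1'=m_2-tm_1$, which is solvable since $t'-t\ne0$ is invertible in $\mathbb D$. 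The $\infty$ cases are handled directly.

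\textbf{Case 3: both $A$ and $B$ are singular with $\rho(A)=\rho(B)$.} The set $S=\{M\text{ singular}: \rho(M)=\rho(A)\}\cup\{\theta\}$ is exactly the left $\mathbb D$-submodule of matrices whose rows satisfy $\text{row}_2=t\,\text{row}_1$, and it is closed under addition. Hence the image of $\omega$ is contained in $S+S=S$. Conversely, setting $Y=0$ and applying Theorem~\ref{thm:image-phi} shows that the image contains $S$. Since $S\ne M_2(\mathbb D)$, $\omega$ is not surjective, and this gives the stated image.
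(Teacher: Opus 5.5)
Your Cases 2 and 3 are correct and agree with the paper's Proposition~\ref{prop-both-non-invertible}. One small slip: for non-central $t\in\mathbb H$ the set $S$ is not a left $\mathbb D$-submodule, since left multiplication by $c$ changes the ratio to $ctc^{-1}$. It is the right ideal $\begin{pmatrix}1&0\\ t&0\end{pmatrix}M_2(\mathbb D)$. You only use closure under addition, so nothing breaks.

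The gap is in Case 1, which carries essentially all of the work of the theorem (Propositions~\ref{thm:mixed-case} and~\ref{thm:doubly-invertible} in the paper). Your key claim is that for non-scalar $B':=A^{-1}B$ a real scalar $Y=\lambda I$ suffices. This is false.
\begin{itemize}
\item Over $\mathbb R$ with $k_1$ even, take $A=I$, $B=\diag(1,-1)$, $M=\begin{pmatrix}-1&1\\ 1&-1\end{pmatrix}$. For every $s\in\mathbb R$ (even ignoring that $s=\lambda^{k_2}\ge 0$ when $k_2$ is even), $M-sB$ has trace $-2$ and determinant $-s^2$. So it has two distinct real eigenvalues, at least one of them negative. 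A real $2\times 2$ even power with real spectrum either has both eigenvalues $\ge 0$ or is scalar: if $X$ has eigenvalues $\mu\neq\bar\mu$ and $\mu^{k_1}\in\mathbb R$, then $X^{k_1}=\mu^{k_1}I$. Hence no $s$ works.
\item Singular $B'$ fails as well. Take $A=I$, $B=-E_{12}$, $M=E_{12}$ with $k_2$ even. Then $M-sB=(1+s)E_{12}$ for $s\ge 0$, a nonzero nilpotent, which is not a $k_1$-th power for any $k_1\ge 2$, over $\mathbb R$ or $\mathbb H$.
\end{itemize}

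Your fallback does not close the gap.
\begin{itemize}
\item If $B'$ is invertible and $Y^{k_2}\approx \tau I$ with $\tau\gg 0$, then $A^{-1}M-B'Y^{k_2}$ is dominated by $-\tau B'$. So it can lie in your open set $U$ only if both eigenvalues of $-B'$ are positive reals. In the first example $-\tau B'$ has eigenvalues $\pm\tau$.
\item If $B'$ is singular, $\{A^{-1}M-B'Z: Z\in M_2(\mathbb D)\}$ is a proper affine subspace, so no openness or density argument applies.
\item The appeal to the complex adjoint over $\mathbb H$ is not an argument.
\end{itemize}

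What is needed is a choice of $Y^{k_2}$ adapted to $B'$, and this is what the paper supplies.
\begin{itemize}
\item For singular $B'$, Theorem~\ref{thm:image-phi} lets $B'Y^{k_2}$ be any matrix with rows $(\beta,t\beta)$. The condition on $X^{k_1}$ then becomes two linear constraints, which are met with positive trace and non-negative determinant by the construction in Part II of the proof of Theorem~\ref{thm:image-phi}. Over $\mathbb H$ one instead makes $C-B'Y^{k_2}$ zero or invertible and uses Lemma~\ref{lem:invertible-kth-root}.
\item For invertible $B'$ over $\mathbb R$, one passes to the real canonical forms (D1)--(D4) and chooses $Y^{k_2}$ case by case so that $X^{k_1}$ has large positive trace and determinant. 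Examples are $B'Y^{k_2}=-\tau I$ using Lemma~\ref{lem:image-power-map-real}(3), and genuinely non-scalar $Y^{k_2}$ when the eigenvalues of $B'$ have opposite signs.
\end{itemize}

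Incidentally, over $\mathbb H$ with $B'$ invertible your scalar idea can be rescued by aiming for invertibility rather than positive eigenvalues. The matrix $A^{-1}M-sB'$ is singular for only finitely many real $s$, and Lemma~\ref{lem:invertible-kth-root} then applies.
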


\begin{remark}
\begin{enumerate}
 \item In \cite{PSS25}, Panja, Saini, and Singh proved that the analogous result over algebraically closed fields. Their characterization of surjectivity is formulated differently from Theorem~\ref{thm-surjectivity}: they showed that the map $w$ is surjective if and only if $A$ and $B$ can be simultaneously conjugated to matrices with no common zero row, and they provided a complete description of all possible images (cf. Theorem A of \cite{PSS25}). However, it is easy to observe that our formulation is equivalent to their formulation. Thus, Theorem~\ref{thm-surjectivity} may be viewed as a reformulation of their surjectivity criterion.

 \item A key ingredient in their approach to proving Theorem A of \cite{PSS25} is the use of conjugation invariance. Since conjugation by an element of $\mathrm{GL}_2(\mathbb K)$ is an automorphism of $M_2(\mathbb K)$, properties such as surjectivity and the vector space structure of the image are preserved under simultaneous conjugation of the coefficient matrices. Thus, after conjugating, one may assume that one of the coefficients, say $A$, is in Jordan canonical form, since they worked over algebraically closed fields. The centralizer $C_{\mathrm{GL}_2(\mathbb K)}(A)$ then acts on the second coefficient $B$ while fixing $A$, and the image of the associated polynomial map is preserved up to conjugation under this action. Consequently, properties such as surjectivity and the vector space structure of the image depend only on the $C_{\mathrm{GL}_2(\mathbb K)}(A)$-orbit of $B$. Thus the problem reduces to the classification of $C_{\mathrm{GL}_2(\mathbb K)}(A)$-orbits on $M_2(\mathbb K)$. 
 In contrast, in our proof of Theorem \ref{thm-surjectivity}, we follow a completely different strategy.
 Since we are working over reals, we will not get Jordan canonical forms for a general matrix. Instead, 
 we use a trace--determinant criterion (cf. Lemma \ref{lem:image-power-map-real}) for the existence of $k$-th roots of matrices. Moreover, our proof provides an alternative proof when $\mathbb D=\mathbb C$.
  \end{enumerate}
\end{remark}

\section{Preliminaries}\label{section-preliminaries}
In this section, we recall some useful facts and prove some elementary results that we will use crucially to prove our main results. We first recall Proposition 2.1 of \cite{SPA25} to prove Lemma \ref{lem:image-power-map-real}.
 
\begin{proposition}\label{prop-extension-field}
Let $\mathbb K$ be a field and $A \in M_n(\mathbb K)$ with a separable characteristic polynomial.
Then the equation $f(X_1,X_2,\ldots,X_m)=A$ in $M_n(\mathbb K)$ has a solution if the equation
$f(X_1,X_2,\ldots,X_m)=J_{\alpha,l}$
has a solution in $M_l(\mathbb K(\alpha))$ for all $\alpha$, which are eigenvalues of
$A$ over $\overline{\mathbb K}$.
\end{proposition}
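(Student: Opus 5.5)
The plan is to reduce to the rational canonical form of $A$ and to realize each block as the image of a field element under a regular representation. I read $f$ as an element of the free algebra $\mathbb K\langle X_1,\dots,X_m\rangle$, so that it commutes with $\mathbb K$-algebra homomorphisms and with conjugation.

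First I would use separability to pin down the shape of $A$. Since the characteristic polynomial $\chi_A$ is separable, it factors over $\mathbb K$ as $\chi_A=p_1\cdots p_r$ with the $p_i$ irreducible, pairwise distinct and separable. Hence $\chi_A$ is also the minimal polynomial, and $A$ is similar over $\mathbb K$ to the block diagonal matrix
\[
C=\diag\bigl(C(p_1),\dots,C(p_r)\bigr),
\]
where $C(p_i)$ is the companion matrix of $p_i$. Over $\overline{\mathbb K}$ every eigenvalue of $A$ is simple, so each Jordan block $J_{\alpha,l}$ occurring has $l=1$. The hypothesis therefore says: for every root $\alpha$ of $\chi_A$ there are $a_1,\dots,a_m\in\mathbb K(\alpha)$ with $f(a_1,\dots,a_m)=\alpha$. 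I only need this for one chosen root $\alpha_i$ of each $p_i$.

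Next I would transport the scalar solution into matrices. Let $d_i=\deg p_i$. Left multiplication on $\mathbb K(\alpha_i)$, written in the basis $1,\alpha_i,\dots,\alpha_i^{d_i-1}$, gives an injective $\mathbb K$-algebra homomorphism
\[
\lambda_i:\mathbb K(\alpha_i)\to M_{d_i}(\mathbb K),
\]
under which $\lambda_i(\alpha_i)$ is the companion matrix $C(p_i)$ (or its transpose, which is similar to it; this depends only on convention). Because $f$ has coefficients in $\mathbb K$ and $\lambda_i$ is a unital $\mathbb K$-algebra homomorphism, we get
\[
f\bigl(\lambda_i(a_1),\dots,\lambda_i(a_m)\bigr)=\lambda_i\bigl(f(a_1,\dots,a_m)\bigr)=\lambda_i(\alpha_i)\sim C(p_i).
\]
So there are $Y^{(i)}_1,\dots,Y^{(i)}_m\in M_{d_i}(\mathbb K)$ with $f(Y^{(i)}_1,\dots,Y^{(i)}_m)=C(p_i)$, after conjugating by the matrix that realizes this similarity.

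Finally I would assemble and conjugate back. Set $Y_j=\diag(Y^{(1)}_j,\dots,Y^{(r)}_j)$. Evaluating a noncommutative polynomial on block diagonal tuples acts blockwise, so $f(Y_1,\dots,Y_m)=C$. Writing $A=PCP^{-1}$, we then have $f(PY_1P^{-1},\dots,PY_mP^{-1})=PCP^{-1}=A$, which is the required solution.

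The step needing the most care is the first one: checking that separability genuinely forces all the relevant Jordan blocks to have size $l=1$, so that the hypothesis is a statement in $M_1(\mathbb K(\alpha))=\mathbb K(\alpha)$. The remaining work is bookkeeping: choosing one root per irreducible factor, which is harmless since conjugate roots generate isomorphic fields, and matching the companion-matrix convention. If non-semisimple blocks were allowed, this argument would instead need an embedding of $M_l(\mathbb K(\alpha))$ into $M_{ld}(\mathbb K)$ carrying $J_{\alpha,l}$ to a matrix similar to the corresponding primary rational block. That case does not arise under the separability assumption, so I would not pursue it.
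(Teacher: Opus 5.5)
Your argument is correct and complete if ``separable'' means that $\chi_A$ has $n$ distinct roots in $\overline{\mathbb K}$. Under that reading, however, every $l$ in the statement equals $1$, and the explicit appearance of $J_{\alpha,l}\in M_l(\mathbb K(\alpha))$ indicates this is not what is intended. The paper only recalls the proposition from \cite{SPA25} and gives no proof. It is presumably meant in the classical sense that every irreducible factor of $\chi_A$ has distinct roots, with repeated factors allowed, so that it applies, e.g., to $\lambda I_2$ or $J_{\lambda,2}$ in $M_2(\mathbb R)$.

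Under that reading your first step fails. The polynomial $\chi_A=\prod p_i^{e_i}$ need not equal the minimal polynomial, Jordan blocks of size $l>1$ occur, and the case you chose not to pursue is exactly the substance of the statement. For the paper's one application, in Lemma~\ref{lem:image-power-map-real} with non-real eigenvalues $a\pm ib$, $\chi_A$ is irreducible with distinct roots, so your version does suffice there.

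The missing ingredient is the embedding you sketched, together with a proof that it does what you need.
\begin{itemize}
\item Put $A$ in primary rational canonical form $\bigoplus C(p^{l})$ over its elementary divisors. For a factor $p$ of degree $d$ with root $\alpha$, let $\Lambda\colon M_l(\mathbb K(\alpha))\to M_{ld}(\mathbb K)$ be the unital $\mathbb K$-algebra embedding obtained by applying your regular representation entrywise.
\item For $q\in\mathbb K[x]$ one has $q(\Lambda(J_{\alpha,l}))=\Lambda(q(J_{\alpha,l}))$. This vanishes iff $(x-\alpha)^l$ divides $q$ in $\mathbb K(\alpha)[x]$. Because separability makes $\alpha$ a simple root of $p$, this holds iff $p^l\mid q$.
\item Hence $\Lambda(J_{\alpha,l})$ has minimal polynomial $p^l$ of degree $ld$. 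So it is cyclic and similar to $C(p^l)$.
\item Separability also ensures that the Jordan blocks of $C(p^l)$ over $\overline{\mathbb K}$ are exactly the $J_{\beta,l}$ with $\beta$ a root of $p$. So the hypothesis is available for precisely the blocks you need.
\end{itemize}
With this lemma, your transport, block assembly and conjugation go through verbatim with $\Lambda$ in place of $\lambda_i$; your argument is the case $l=1$.
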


\begin{lemma}[Trace-Determinant criteria]\label{lem:image-power-map-real}
Let $k \geq 2$, and let $P_k:M_2( \mathbb{R}) \to M_2( \mathbb{R})$ denote the $k$-th power map given by $P_k(X)=X^k$. Then the following statements hold:

\begin{enumerate}
    \item If $k$ is odd, then
    $\operatorname{Im}(P_k)=\{A\in M_2( \mathbb{R}) \mid A \text{ is not nilpotent}\}.$

    \item If $k$ is even, then every matrix $A\in M_2( \mathbb{R})$ satisfying any one of the following conditions belongs to $\operatorname{Im}(P_k)$:
    \begin{enumerate}
        \item $A$ has non-real eigenvalues of the form $\alpha=a+ib$, where $b\neq 0$;
        
        \item $\operatorname{tr}(A)\geq 0$ and $\operatorname{det}(A)\geq 0$, with $\operatorname{tr}(A)$ and $\operatorname{det}(A)$ not simultaneously zero.
        \end{enumerate}
        \item If $A=\lambda I$ for $\lambda\in\mathbb R$, where $I$ is the identity matrix in $M_2(\mathbb R)$, then $A\in {\rm Im}(P_k)$ for any $k\geq 2$.
\end{enumerate}
\end{lemma}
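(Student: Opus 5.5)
We plan to prove Lemma \ref{lem:image-power-map-real} by sorting $A\in M_2(\mathbb R)$ by its real canonical form, building an explicit real $k$-th root in each case, and using Proposition \ref{prop-extension-field} only where the characteristic polynomial is separable. Since conjugation commutes with $P_k$, ${\rm Im}(P_k)$ is closed under ${\rm GL}_2(\mathbb R)$-conjugation. It therefore suffices to treat the normal forms:
\[
\begin{pmatrix}\lambda&0\\0&\mu\end{pmatrix}\ (\lambda\neq\mu),\qquad \begin{pmatrix}a&-b\\ b&a\end{pmatrix}\ (b\neq 0),\qquad \lambda I,\qquad \begin{pmatrix}\lambda&1\\0&\lambda\end{pmatrix}.
\]

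For the non-real eigenvalue case (part (2)(a), and also part (1)), the characteristic polynomial is separable. The eigenvalue $\alpha=a+ib$ has a $k$-th root in $\mathbb R(\alpha)=\mathbb C$, so Proposition \ref{prop-extension-field} (with $f=x^k$, $l=1$) gives a real root. Alternatively one can see this directly: $\begin{pmatrix}a&-b\\b&a\end{pmatrix}$ corresponds to $re^{i\theta}$, and $r^{1/k}e^{i\theta/k}$ gives a real rotation-scaling matrix whose $k$-th power is $A$.

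For distinct real eigenvalues $\lambda\neq\mu$, we need real $k$-th roots of $\lambda$ and $\mu$. If $k$ is odd these always exist. If $k$ is even they exist exactly when $\lambda,\mu\ge 0$. The hypothesis $\operatorname{tr}\ge0$, $\det\ge0$ forces this: $\det\ge 0$ means $\lambda,\mu$ have the same sign or one is zero, and then $\operatorname{tr}\ge 0$ makes both nonnegative.

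For scalars (part (3)), note that $\lambda I$ with $\lambda<0$ and $k$ even is still a $k$-th power. Take $X=|\lambda|^{1/k}R_{\pi/k}$, where $R_\theta$ is rotation by $\theta$; then $X^k=|\lambda|R_\pi=\lambda I$. For $\lambda\ge0$, or for $k$ odd, simply use $\lambda^{1/k}I$.

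The Jordan block $J=\lambda I+N$ with $\lambda\neq 0$ is the remaining case. Take $X=cI+dN$ with $c^k=\lambda$ and $d=1/(kc^{k-1})$; the binomial theorem with $N^2=0$ gives $X^k=\lambda I+N$. Such a real $c$ exists for $k$ odd, and for $k$ even when $\lambda>0$. This is exactly the condition $\operatorname{tr}=2\lambda\ge0$, with the nonzero requirement excluding $\lambda=0$.

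In part (2)(b), the case $\operatorname{tr}=\det=0$ is excluded. Such a matrix has both eigenvalues zero, so it is nilpotent, which is why it must be ruled out.

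For the converse in part (1), if $X^k=A$ with $A\neq0$ nilpotent, then $X$ is nilpotent. Since $X$ is $2\times2$, this gives $X^2=0$, hence $X^k=0$ for $k\ge2$, a contradiction. The zero matrix itself is, of course, attained by $X=0$, so the precise statement should be read as ``non-nilpotent or zero.'' We would flag this subtlety and prove ${\rm Im}(P_k)=\{A:A \text{ not nilpotent}\}\cup\{0\}$.

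The main obstacle is keeping the case analysis exhaustive. In particular, the non-diagonalizable case and the scalar negative case both need explicit constructions, because Proposition \ref{prop-extension-field} requires separability and does not apply there.
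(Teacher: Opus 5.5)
Your proposal is correct and follows essentially the same route as the paper: reduction to real canonical forms, Proposition \ref{prop-extension-field} (or the rotation--scaling argument) for non-real eigenvalues, the binomial construction $cI+dN$ for the Jordan block, the scaled rotation by $\pi/k$ for negative scalars, and the fact that a $2\times 2$ nilpotent $X$ satisfies $X^2=0$ for the converse in (1). Your caveat is also right: as literally stated, (1) wrongly excludes the zero matrix $0=0^k$, so the image is the set of non-nilpotent matrices together with $0$; your explicit step ``$X^k$ nilpotent $\Rightarrow$ $X$ nilpotent'' also fills a small gap that the paper leaves implicit.
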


\begin{proof}
   Suppose that $A\in M_2(\mathbb R)$ has eigenvalues $\alpha\in\mathbb C$ of the form $\alpha=a+ib$, $b\neq 0$ and $\bar\alpha=a-ib$. Then $\alpha$ and $\bar\alpha$ have $k$-th root in $\mathbb C$, and hence by Proposition \ref{prop-extension-field}, $X^k=A$ has a solution in $M_2(\mathbb R)$, and hence $A\in \operatorname{Im}(P_k)$. One can also directly prove this without using Proposition \ref{prop-extension-field}.

 $(1)$ Suppose that $k$ is odd. By the above observation, we may assume all eigenvalues of $A$ are real. If the eigenvalues are distinct, then up to conjugation by an invertible matrix, we may assume $\diag(a_{11}, a_{22})$, and for each $a_{ii}$ has a real $k$-th root, and hence there exists a real matrix $X$ such that $X^k=A$. 
If the eigenvalues are the same, then up to conjugation by an invertible matrix, it is an upper-triangular matrix with diagonal entries $a$, or it is a diagonal matrix with diagonal entries $a$. If $A$ is diagonal, then by the above observation, we are done. 
Suppose that
$
A=\begin{pmatrix} a & 1 \\ 0 & a \end{pmatrix}=aI+N,
$
where
$
N=\begin{pmatrix}0&1\\0&0\end{pmatrix}
$
satisfies $N^2=0$. If $a\neq 0$ and $k$ is odd, then $a$ has a unique real $k$-th root $x\neq 0$ such that $x^k=a$. Writing $X=xI+yN$, we have
$
X^k=(xI+yN)^k=x^kI+kx^{k-1}yN,
$
since $N^2=0$. Thus $X^k=A$ if and only if $x^k=a$ and $kx^{k-1}y=1$, so
$
y=\frac{1}{k}x^{-(k-1)}.
$
Hence
$
X=\begin{pmatrix}
x & \frac{1}{k}x^{-(k-1)}\\
0 & x
\end{pmatrix}
$
is a real $k$-th root of $A$. On the other hand, if $a=0$ and $k>1$, then $A=N$ is nilpotent, and no such root exists since every $2\times2$ nilpotent matrix $X$ satisfies $X^2=0$, implying $X^k=0\neq N$.
 This proves $(1)$.

$(2)$ If $(a)$ holds, then by above observation, $A\in {\rm Im}(P_k)$. Suppose $(b)$ holds. 
 Let $\lambda_1,\lambda_2$ be the eigenvalues of $A$. Since $\lambda_1+\lambda_2=\operatorname{tr}(A)\geq0$ and $\lambda_1\lambda_2=\det(A)\geq0$, the eigenvalues are either both non-negative real numbers or a pair of (non-real) complex conjugates with non-negative real parts. If they are real, then $A$ is similar over $\mathbb R$ either to $\begin{pmatrix}\lambda_1&0\\0&\lambda_2\end{pmatrix}$ or to $\begin{pmatrix}\lambda&1\\0&\lambda\end{pmatrix}$ with $\lambda\geq0$. In the diagonal case, since $\lambda_1, \lambda_2\geq 0$, both are not zero simultaneously, it has $k$-root, say $\lambda_i^{1/k}$ for $i=1,2$, we have $\begin{pmatrix}\lambda_1^{1/k}&0\\0&\lambda_2^{1/k}\end{pmatrix}^k=\begin{pmatrix}\lambda_1&0\\0&\lambda_2\end{pmatrix}$. In the second case, necessarily $\lambda>0$ since $(\operatorname{tr}(A),\det(A))\neq(0,0)$, and if $B=\begin{pmatrix}\lambda^{1/k}&\frac{1}{k\lambda^{(k-1)/k}}\\0&\lambda^{1/k}\end{pmatrix}$, then $B^k=\begin{pmatrix}\lambda&1\\0&\lambda\end{pmatrix}$. If the eigenvalues are non-real, then by the observation in the first paragraph of the proof, we say that $A\in {\rm Im}(P_k).$ 
 
 
 (3) Suppose $A=\lambda I_2$ with $\lambda<0$. Let $a=(-\lambda)^{1/k}>0$ and consider the matrix
$B=a\begin{pmatrix}
\cos(\pi/k) & -\sin(\pi/k)\\
\sin(\pi/k) & \cos(\pi/k)
\end{pmatrix}.
$
Then,
$
B^k=a^k(-I_2)=-a^kI_2=\lambda I_2=A.
$
Therefore, $A$ admits a real $k$-th root. The rest of the part is easy to see, and hence we omit the details.
 \end{proof}
 \begin{remark}
    Lemma \ref{lem:image-power-map-real}(1) holds for $P_k:M_n(\mathbb R)\to M_n(\mathbb R)$ for $k$ odd. 
 \end{remark}


\subsection{Dieudonne determinant in division algebra}
Let $\mathbb D$ be a division algebra with center $\mathbb K$. For matrices over a commutative unital ring $R$, the determinant is the unique alternating $R$-multilinear map
$\det : M_n(R)\to R$
satisfying $\det(I_n)=1_R$. In particular, for a $2\times 2$ matrix
$A=\begin{pmatrix} a & b \\ c & d \end{pmatrix}$, the determinant is $\det(A)=ad-bc.$
For matrices over a noncommutative division algebra $\mathbb D$, the classical determinant is replaced by the \emph{Dieudonne determinant}, introduced by Dieudonne in 1943 \cite{D43}. The function requires the multiplicative property;
$\operatorname{Det}(AB)=\operatorname{Det}(A)\operatorname{Det}(B),
\; A,B\in M_n(\mathbb D).$ By Theorem 1.6 in \cite{B68}, this function
vanishes for all singular matrices in $M_n(\mathbb D)$. Then, it suffices to define the determinant
over the group of all non-singular matrices ${\rm GL}_n(\mathbb D)$.
By Definition~3 (page~135) of~\cite{Skewfield83}, the Dieudonne determinant is the group homomorphism
$
\operatorname{Det}:{\rm GL}_n(\mathbb D)\to \mathbb D^\times/[\mathbb D^\times,\mathbb D^\times],
$
induced by the map 
${\rm\delta et}:M_n(\mathbb D)\to\mathbb D$ defined by

$${\rm\delta et}(A)=
\begin{cases}
 0 , & A\in M_n(\mathbb D)\setminus{{\rm GL}_n(\mathbb D)}, \\[2mm]
 \operatorname{sgn}(\pi)\prod_{i=1}^n u_i, & A\in {\rm GL}_n(\mathbb D).
\end{cases}$$
where $u_1,\ldots,u_n$ are the nonzero diagonal entries of the upper triangular matrix in the Bruhat normal form of $A$, and $\pi$ is the corresponding permutation (Definition 1, page 133 in \cite{Skewfield83}). Thus,
$
\operatorname{Det}(A)=[{\rm\delta et}(A)]
$
for $A\in{\rm GL}_n(\mathbb D)$, and $\operatorname{Det}(A)=[0]$ if $A$ is singular.
For a $2\times 2$ matrix
$A=
\begin{pmatrix} a & b \\ c & d \end{pmatrix}
$
the Dieudonne determinant 
$$
\operatorname{Det}(A)=
\begin{cases}
\left[-bc\right], & a=0, \\[2mm]
\left[ad-aca^{-1}b\right], & a\neq 0.
\end{cases}
$$
where $[x]$ denotes the class of $x\in \mathbb D^\times$ in the abelianized multiplicative group $\mathbb D^\times/[\mathbb D^\times,\mathbb D^\times]$ (cf. \cite{products-of-hermitian-matrices25}, p-533).

\subsection{The quaternion algebra}\label{sec:prelim-H}
The quaternion algebra $\mathbb{H}$ is the unique four-dimensional non-commutative division algebra over $\mathbb{R}$ with basis $\{1,i,j,k\}$ satisfying $i^2=j^2=k^2=ijk=-1$, which imply $ij=k=-ji$, $jk=i=-kj$, and $ki=j=-ik$. Every quaternion $q\in\mathbb{H}$ can be written uniquely as $q=a+bi+cj+dk$ for $a,b,c,d\in\mathbb{R}$. The conjugate and norm of $q$ are defined by $\bar q=a-bi-cj-dk$ and $|q|=\sqrt{q\bar q}=\sqrt{a^2+b^2+c^2+d^2}$, respectively. The norm is multiplicative, i.e. $|pq|=|p||q|$ for all $p,q\in\mathbb{H}$, and every nonzero quaternion is invertible with inverse $q^{-1}=\bar q/|q|^2$. Thus $\mathbb{H}$ forms a division algebra, although multiplication is generally non-commutative. 

It is important to note that the classical determinant formula does not extend to $M_2(\mathbb{H})$. For a matrix 
$A=\begin{pmatrix} a & b \\ c & d \end{pmatrix}\in M_2(\mathbb{H})$, the four naive analogues of the determinant are
$ad-cb,\; ad-bc,\; da-cb,\; da-bc.$
However, none of these expressions, individually or collectively, characterizes invertibility. For instance, the unitary matrices
\[
A=\frac{1}{\sqrt{2}}\begin{pmatrix}1&i\\ j&k\end{pmatrix},
\qquad
B=\frac{1}{\sqrt{2}}\begin{pmatrix}i&j\\ j&i\end{pmatrix}
\]
are both invertible, yet for $A$ exactly two of the above expressions vanish, while for $B$ all four vanish. We use
the Dieudonn\'e determinant, as defined in the earlier subsection. For the Hamilton quaternion division algebra $\mathbb{H}$, the multiplicative group of nonzero quaternions is $\mathbb{H}^{\times}$. Every nonzero quaternion can be written in the form $q=ru$, where $r\in \mathbb{R}_{>0}$ and $u$ is a unit quaternion. Hence
$\mathbb{H}^{\times}\cong \mathbb{R}_{> 0}\times \mathrm{SU}(2).$
Since $\mathrm{SU}(2)$ is perfect, its commutator subgroup coincides with itself. Therefore,
$
[\mathbb{H}^{\times},\mathbb{H}^{\times}]
=
\{q\in\mathbb{H}^{\times}:N(q)=1\},
$
where $N(q)$ denotes the quaternion norm. Consequently,
$
\mathbb{H}^{\times}/[\mathbb{H}^{\times},\mathbb{H}^{\times}]
\cong
\mathbb{R}_{> 0}.
$
Thus, the Dieudonné determinant of a matrix over $\mathbb{H}$ may be regarded as taking values in $\mathbb{R}_{> 0}$ on invertible matrices.

\begin{lemma}\label{lem:non-invertible-form}
Let $A\in M_2(\mathbb{H})$ be a non-zero non-invertible matrix. Then $A$ has one of the forms
$A=
\begin{pmatrix}
a_1 & a_2\\
ta_1 & ta_2
\end{pmatrix}\;\text{or}\;
A=
\begin{pmatrix}
0 & 0\\
a_1 & a_2
\end{pmatrix},$
for some $a_1,\;a_2,\;t\in\mathbb{H}$ with $a_1$ and $a_2$ not both zero.
\end{lemma}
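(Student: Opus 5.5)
The plan is to show that the two rows of $A$ are left-linearly dependent over $\mathbb H$, and then to translate that dependence into one of the two displayed forms. Write
\[
A=\begin{pmatrix} a_1 & a_2\\ c_1 & c_2\end{pmatrix}.
\]
There are two cases, according to whether the first row $(a_1,a_2)$ is zero.

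If the first row is zero, then the second row $(c_1,c_2)$ is nonzero, because $A\neq 0$. So $A$ already has the second form, with $(a_1,a_2)$ renamed as $(c_1,c_2)$.

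Now suppose the first row is nonzero. It suffices to produce $t\in\mathbb H$ with $(c_1,c_2)=t(a_1,a_2)$, since that is exactly the first form. Assume no such $t$ exists; we will show $A$ is invertible, contradicting the hypothesis. The argument is elementary row reduction over a division ring, which uses only left multiplication by invertible elementary matrices and so is valid over $\mathbb H$.

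Suppose first that $a_1\neq 0$. Subtract $c_1a_1^{-1}$ times row 1 from row 2 to get
\[
E A=\begin{pmatrix} a_1 & a_2\\ 0 & c_2-c_1a_1^{-1}a_2\end{pmatrix},
\]
where $E$ is invertible. If $d:=c_2-c_1a_1^{-1}a_2=0$, then taking $t=c_1a_1^{-1}$ gives $ta_1=c_1$ and $ta_2=c_2$, which is the first form. If $d\neq 0$, then $EA$ is upper triangular with invertible diagonal entries $a_1$ and $d$. Its inverse can be written down explicitly:
\[
\begin{pmatrix} a_1^{-1} & -a_1^{-1}a_2d^{-1}\\ 0 & d^{-1}\end{pmatrix}.
\]
Checking both products uses only associativity. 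So $EA$ is invertible, hence $A$ is invertible, a contradiction. This matches the Dieudonné formula $\operatorname{Det}(A)=[a_1d]\neq[0]$.

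Suppose instead that $a_1=0$. Then $a_2\neq 0$, and the same argument works with the roles of the columns exchanged. Put $t=c_2a_2^{-1}$. If $c_1=0$, then $(c_1,c_2)=t(0,a_2)$, which is the first form. If $c_1\neq 0$, then
\[
A=\begin{pmatrix} 0 & a_2\\ c_1 & c_2\end{pmatrix}
\]
is invertible: swapping the rows gives an upper triangular matrix with invertible diagonal entries $c_1$ and $a_2$. This again contradicts non-invertibility.

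The only step that needs real care is the claim that an upper triangular matrix with nonzero diagonal entries is invertible over a noncommutative division ring. I would settle it with the explicit two-sided inverse above rather than by appealing to a determinant. Non-commutativity enters only through the order of the factors, so I must keep every scalar multiplying on the left. That is why the dependency ratio $t$ appears as a left factor, as in \eqref{eq:intro-ratio-form}.
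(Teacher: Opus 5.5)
Your proof is correct and follows essentially the same route as the paper. When the $(1,1)$ entry is nonzero you eliminate with $c_1a_1^{-1}$, which matches the paper's factorization $A=LU$; when it is zero you swap to a triangular matrix with nonzero diagonal, which the paper does via right multiplication by $\begin{pmatrix}0&1\\1&0\end{pmatrix}$. Splitting first on whether the first row vanishes, and writing down the explicit two-sided inverse of the triangular factor, are only cosmetic differences, and they are slightly tidier than the paper's extra split on whether $b=0$.
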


\begin{proof}
Let $A=\begin{pmatrix}a&b\\ c&d\end{pmatrix}\in M_2(\mathbb H)$
be non-zero and non-invertible. If $a\neq 0$ and $d-ca^{-1}b\neq 0$, then both $a$ and $d-ca^{-1}b$ are invertible, and $A=
\begin{pmatrix}
1&0\\
ca^{-1}&1
\end{pmatrix}
\begin{pmatrix}
a&b\\
0&d-ca^{-1}b
\end{pmatrix},
$
expresses $A$ as a product of two invertible matrices, a contradiction. Hence
$d=ca^{-1}b.$
If $b\neq 0$, setting $t=ca^{-1}$ gives $c=ta$ and $d=tb$, so
$A=\begin{pmatrix}a&b\\ ta&tb\end{pmatrix}.$
If $b=0$, then $d=0$, and writing $t=ca^{-1}$ yields
$A=\begin{pmatrix}a&0\\ ta&0\end{pmatrix}.$
Now suppose $a=0$. If $b\neq 0$ and $c\neq 0$, then
$A=
\begin{pmatrix}
b&0\\
d&c
\end{pmatrix}
\begin{pmatrix}
0&1\\
1&0
\end{pmatrix},
$
where both factors are invertible, contradicting the singularity of $A$. Thus either $b=0$ or $c=0$. If $b=0$, then
$A=\begin{pmatrix}0&0\\ c&d\end{pmatrix},$
which is of the second form. If $c=0$, writing $t=db^{-1}$ gives
$A=\begin{pmatrix}0&b\\ 0&tb\end{pmatrix},$
which is of the first form. Since $A\neq0$, the entries in the nonzero row are not both zero.
\end{proof}

We now recall some facts from \cite{W55}, \cite{SPA25} and \cite{L14}.

\begin{lemma}\cite{SPA25}\label{lem:roots-H}
For every $q\in\mathbb H$ and every $k\in\N$ there exists $w\in\mathbb H$ with $w^k=q$.
\end{lemma}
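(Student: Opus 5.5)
Since $\mathbb H$ is a division algebra, every quaternion lies in a commutative subfield isomorphic to $\mathbb C$; I will reduce to extracting $k$-th roots in $\mathbb C$. The key steps, in order, are as follows.

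First I treat the trivial cases. If $q=0$, take $w=0$. If $q=r\in\mathbb R$ is real, then $q$ lies in the subfield $\mathbb R+\mathbb R i\cong\mathbb C$. Since $\mathbb C$ is algebraically closed, there is $z=x+yi\in\mathbb C$ with $z^k=r$, and the quaternion $w=x+yi$ satisfies $w^k=r$. Note that for $r<0$ and $k$ even a real root does not exist, so passing to the copy of $\mathbb C$ is genuinely needed here.

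Second, for a non-real $q=a+v$ with $v=bi+cj+dk\neq0$, I set $\mu=v/|v|$, a pure unit quaternion. A direct computation using the multiplication rules gives $\mu^2=-|\mu|^2=-1$, because the cross terms cancel by anticommutativity of $i,j,k$. Hence the map $\mathbb C\to\mathbb H$, $x+yi\mapsto x+y\mu$, is an injective $\mathbb R$-algebra homomorphism. Its image $\mathbb R[\mu]$ is a commutative subfield containing $q=a+|v|\mu$.

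Third, I choose $z\in\mathbb C$ with $z^k=a+|v|i$, which exists by algebraic closure; explicitly one may use polar form, $z=\rho^{1/k}e^{i\theta/k}$. The image $w$ of $z$ in $\mathbb R[\mu]$ then satisfies $w^k=q$, because the embedding is a ring homomorphism.

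The only step needing care is verifying $\mu^2=-1$ and that the embedding respects multiplication. This is routine, so I expect no real obstacle.
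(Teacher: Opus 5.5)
Your proof is correct. The paper has no argument of its own to compare against: the lemma is simply imported from \cite{SPA25}.

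Your route is the standard one and is fully self-contained. You write $q=a+|v|\mu$ with $\mu=v/|v|$ a pure unit quaternion, check that $\mu^2=-1$ and that $\mathbb R$ is central, so that $x+yi\mapsto x+y\mu$ is an injective $\mathbb R$-algebra map $\mathbb C\to\mathbb H$. You then transport a complex $k$-th root of $a+|v|i$ back along this embedding. Your real case is also handled correctly: when $r<0$ and $k$ is even you use a non-real root inside $\mathbb R+\mathbb R i$.

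An equivalent argument uses only tools the paper already quotes. Apply Lemma \ref{lem:jordan-H} with $n=1$: every quaternion is conjugate to a complex number, so $q=pzp^{-1}$ with $z\in\mathbb C$, and then $w=p\,z^{1/k}p^{-1}$ works. That version relies on the Wiegmann normal form and mirrors how roots of invertible matrices are obtained in Lemma \ref{lem:invertible-kth-root}. Yours needs nothing beyond the multiplication table of $\mathbb H$.

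One cosmetic point: the opening justification ``since $\mathbb H$ is a division algebra'' is not the reason $q$ lies in a copy of $\mathbb C$. The reason is your explicit computation $\mu^2=-1$, which you do supply.
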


\begin{lemma}\protect{\cite[Theorem 1]{W55}}, \cite[Lemma 3]{L01}
\label{lem:jordan-H}
Every matrix $A\in M_n({\mathbb H})$ is similar, via a transformation in
$\GL_n(\mathbb H)$, to a complex Jordan canonical form whose diagonal entries are
of the form $a+bi$ with $b\geq 0$. That is, there exists
$\lambda_1,\dots,\lambda_r\in\C\subset\mathbb H$ (with non-negative imaginary
part) and integers $n_1,\dots,n_r$ such that
$A$ is similar to a matrix of the form $J_{\lambda_1,n_1}\oplus J_{\lambda_2,n_2}\oplus\cdots
   \oplus J_{\lambda_r,n_r},$ where $\lambda_k=a_k+ib_k$ being the right eigen values of $A$.
\end{lemma}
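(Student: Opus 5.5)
The plan is to work through the complex adjoint representation. Every $A\in M_n(\mathbb H)$ can be written uniquely as $A=A_1+A_2j$ with $A_1,A_2\in M_n(\C)$. Set
\[
\chi(A)=\begin{pmatrix} A_1 & A_2\\ -\overline{A_2} & \overline{A_1}\end{pmatrix}\in M_{2n}(\C).
\]
I would first check the basic properties of $\chi$:
\begin{itemize}
\item $\chi$ is an injective homomorphism of real algebras.
\item $\chi(A)$ is invertible if and only if $A$ is.
\item The image of $\chi$ is exactly the set of $M\in M_{2n}(\C)$ with $\Omega\overline{M}\,\Omega^{-1}=M$, where $\Omega=\begin{pmatrix}0&I\\-I&0\end{pmatrix}$.
\end{itemize}
Define the antilinear map $\sigma(v)=\Omega\bar v$ on $\C^{2n}$. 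Then $\sigma^2=-1$, and $\sigma$ commutes with every $\chi(A)$. Moreover, for a complex Jordan matrix $J$ we have $\chi(J)=J\oplus\overline J$.

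The argument then has three steps.

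\textbf{Step 1: Jordan structure of $\chi(A)$.} Because $\sigma$ commutes with $\chi(A)$ and is antilinear, it maps $\ker(\chi(A)-\lambda)^m$ bijectively onto $\ker(\chi(A)-\bar\lambda)^m$. Hence for each non-real $\lambda$, the Jordan block sizes at $\lambda$ and at $\bar\lambda$ coincide.

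For real $\lambda$, each kernel $\ker(\chi(A)-\lambda)^m$ is $\sigma$-invariant. Since $\sigma^2=-1$, this kernel is a right $\mathbb H$-space, so it has even complex dimension. The number of Jordan blocks of size $\ge m$ is a difference of these dimensions, so every block size at a real eigenvalue occurs an even number of times.

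It follows that $\chi(A)$ is complex-similar to $J\oplus\overline J=\chi(J)$. Here $J=J_{\lambda_1,n_1}\oplus\cdots\oplus J_{\lambda_r,n_r}$ is built from the eigenvalues with $\operatorname{Im}\lambda\ge 0$: one block from each conjugate pair, and half of the blocks at each real eigenvalue.

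\textbf{Step 2: descent of similarity.} This is the main obstacle: showing that if $\chi(A)$ and $\chi(B)$ are similar over $\C$, then $A$ and $B$ are similar over $\mathbb H$. Let
\[
W=\{P\in M_{2n}(\C):\chi(A)P=P\chi(B)\}.
\]
This is a complex vector space, and it is stable under the antilinear involution $P\mapsto\Omega\overline P\,\Omega^{-1}$. So the fixed set $W_0$ is a real form of $W$, meaning $W=W_0\oplus iW_0$, and $W_0=\chi(\{Q\in M_n(\mathbb H): AQ=QB\})$.

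The polynomial $\det$ is not identically zero on $W$, since $W$ contains an invertible element. A real form is Zariski dense in its complexification, so $\det$ cannot vanish identically on $W_0$. Therefore there is a $Q$ with $\chi(Q)$ invertible and $AQ=QB$, which gives $A=QBQ^{-1}$ with $Q\in\GL_n(\mathbb H)$.

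\textbf{Step 3: conclusion and the sign convention.} Applying Step 2 to $A$ and the $J$ from Step 1 shows that $A$ is similar over $\mathbb H$ to $J$. The diagonal entries $\lambda_k=a_k+ib_k$ have $b_k\ge 0$ by construction.

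To identify these as right eigenvalues, note that $Jv=v\lambda_k$ for the appropriate standard basis vector $v$. Conjugation by $j$ sends $\lambda$ to $\bar\lambda$, which explains why the imaginary parts can be normalized to be nonnegative. Transporting $v$ back through the similarity shows that each $\lambda_k$ is a right eigenvalue of $A$.
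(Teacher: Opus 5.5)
Your proof is correct. The paper gives no argument for this lemma. It imports it as a black box from Wiegmann's Theorem~1 and [L01, Lemma~3], and uses it only in the $2\times 2$ case, to put $B$ into one of three normal forms in the proof of Proposition~\ref{thm:doubly-invertible}.

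Your route is a self-contained proof through the complex adjoint representation, and each step checks:
\begin{enumerate}
\item $\chi$ is a unital injective $\mathbb{R}$-algebra map whose image is exactly the set of $M$ with $\Omega\overline{M}\Omega^{-1}=M$.
\item The antilinear $\sigma$ with $\sigma^2=-1$ maps $\ker(\chi(A)-\lambda)^m$ bijectively onto $\ker(\chi(A)-\bar\lambda)^m$ for every $m$. So conjugate eigenvalues carry identical Jordan data.
\item For real $\lambda$, $\sigma$ makes each $\ker(\chi(A)-\lambda)^m$ an $\mathbb{H}$-space. So every block size at a real eigenvalue has even multiplicity, and hence $\chi(A)\sim J\oplus\overline{J}=\chi(J)$ over $\mathbb{C}$.
\item The map $P\mapsto\Omega\overline{P}\Omega^{-1}$ is an antilinear involution of the intertwiner space $W$. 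Its fixed real form is $\chi(\{Q:AQ=QJ\})$, so Zariski density of a real form yields an invertible quaternion intertwiner.
\end{enumerate}
Compared with the citation, your argument costs about a page. In return it is independent of the literature and gives a stronger by-product: two quaternion matrices are similar over $\mathbb{H}$ if and only if their complex adjoints are similar over $\mathbb{C}$.

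One small addition concerns the last clause. You show each $\lambda_k$ is a right eigenvalue of $A$. If the clause is read as saying these are \emph{all} the right eigenvalues up to quaternion similarity, you also need the converse, which is short:
\begin{enumerate}
\item A right eigenvalue of $A=QJQ^{-1}$ is also a right eigenvalue of $J$.
\item It is quaternion-similar to some complex $\nu$ with nonnegative imaginary part, and $\nu$ is again a right eigenvalue of $J$.
\item Write a corresponding eigenvector as $u_1+u_2j$ with $u_1,u_2\in\mathbb{C}^n$. The equation $J(u_1+u_2j)=(u_1+u_2j)\nu$ splits into $Ju_1=\nu u_1$ and $Ju_2=\bar\nu u_2$.
\item Hence $\nu$ or $\bar\nu$ equals some $\lambda_k$. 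Since both $\nu$ and $\lambda_k$ have nonnegative imaginary part, this forces $\nu=\lambda_k$.
\end{enumerate}
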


\begin{lemma}\protect{\cite[Section 5]{SPA25}}\label{lem:invertible-kth-root}
Let $\mathbb D\in\{\mathbb C,\mathbb H\}$, let $A\in M_2(\mathbb D)$ be invertible, and let $k\ge 1$ be an integer. Then there exists $B\in M_2(\mathbb D)$ such that
$B^k=A.$
\end{lemma}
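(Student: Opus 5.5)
The plan is to reduce to a (complex) Jordan canonical form, take $k$-th roots blockwise there, and conjugate back. The key observation is that if $A=PJP^{-1}$ with $P\in\GL_2(\mathbb D)$ and $Y^k=J$, then $B:=PYP^{-1}$ satisfies
\[
B^k=PY^kP^{-1}=PJP^{-1}=A.
\]
So it suffices to produce a $k$-th root of a Jordan form $J$ that is similar to $A$.

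\emph{Step 1 (normal form).} For $\mathbb D=\C$ we use the usual Jordan form. For $\mathbb D=\mathbb H$ we invoke Lemma~\ref{lem:jordan-H}. In both cases $A$ is similar to either $\diag(\lambda_1,\lambda_2)$ or to $J_{\lambda,2}=\lambda I+N$, where
\[
N=\begin{pmatrix}0&1\\0&0\end{pmatrix},
\]
and all $\lambda$'s lie in $\C\subset\mathbb D$.

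Next I would check that invertibility of $A$ forces every $\lambda_i\neq 0$. Similarity preserves invertibility. An upper triangular matrix with a zero diagonal entry is singular: in the $2\times2$ case it has a zero first column, or a zero second row, so it cannot be invertible. This holds over $\mathbb H$ as well as over $\C$.

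\emph{Step 2 (roots of the normal form).} In the diagonal case, choose $\mu_i\in\C$ with $\mu_i^k=\lambda_i$. This is possible since $\C$ is algebraically closed; alternatively, Lemma~\ref{lem:roots-H} could be used over $\mathbb H$. Then $Y=\diag(\mu_1,\mu_2)$ satisfies $Y^k=J$.

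In the Jordan block case, pick $\mu\in\C$ with $\mu^k=\lambda$; note $\mu\neq0$ because $\lambda\neq0$. Put $Y=\mu I+yN$. Since $\mu I$ commutes with $N$ (the entries are complex, hence commute with each other) and $N^2=0$, the binomial expansion gives
\[
Y^k=\mu^kI+k\mu^{k-1}yN .
\]
Taking $y=\frac{1}{k\mu^{k-1}}$ therefore yields $Y^k=J_{\lambda,2}$. This mirrors the computation in the proof of Lemma~\ref{lem:image-power-map-real}.

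\emph{Step 3 (conjugate back).} Set $B=PYP^{-1}$, which is a $k$-th root of $A$ by the observation at the start.

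The only point requiring care is the quaternionic case. There the entries of $P$ do not commute with those of $Y$, so I would make sure not to use any commutativity in the argument. This is indeed the case: the argument only uses associativity in $B^k=PY^kP^{-1}$ and commutativity inside the complex subalgebra $\C\subset\mathbb H$ where $Y$ and $J$ live. The main obstacle is thus just the structural input of Lemma~\ref{lem:jordan-H}, together with the observation that the right eigenvalues of an invertible $A$ are nonzero. With these, the argument is routine.
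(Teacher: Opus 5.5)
Your proof is correct. You conjugate to the complex Jordan form of Lemma~\ref{lem:jordan-H}, use invertibility to exclude a zero diagonal entry (and hence the nilpotent block), take $k$-th roots of $\operatorname{diag}(\lambda_1,\lambda_2)$ or $J_{\lambda,2}$ inside the commutative subfield $\C\subset\mathbb H$, and conjugate back, which needs only associativity in $M_2(\mathbb H)$.

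The paper gives no proof of this lemma; it only cites \cite[Section 5]{SPA25}. Your Jordan-form argument is the natural self-contained proof, and it uses the same ingredients the paper records: Lemma~\ref{lem:jordan-H} and the Jordan-block root computation from Lemma~\ref{lem:image-power-map-real}.
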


\begin{lemma}\protect{\cite[Proposition 5.3.4]{L14}}\label{lem:translate-to-invertibility}
Let $A\in M_2(\mathbb H)$ be non-invertible. Then there exists $\lambda\in\mathbb H$ such that $\lambda$ is not a left eigenvalue of $A$ (i.e., $Av=\lambda v$ for some $v\in \mathbb H^2$) and
$A-\lambda I_2$ is invertible.
\end{lemma}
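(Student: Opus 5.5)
The statement to prove is Lemma \ref{lem:translate-to-invertibility}. I would give a direct proof that does not go through the general theory of left eigenvalues. The plan is to write $A$ in one of the two normal forms of Lemma \ref{lem:non-invertible-form} and then compute the Dieudonné-type invertibility condition for $A-\lambda I_2$ explicitly. I would also check the parenthetical claim: $A-\lambda I_2$ is invertible exactly when there is no nonzero $v$ with $(A-\lambda I)v=0$, i.e. $Av=\lambda v$. So the two conclusions are equivalent, and it suffices to produce a $\lambda$ making $A-\lambda I_2$ invertible.

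\textbf{Criterion.} For $M=\begin{pmatrix}a&b\\c&d\end{pmatrix}$ with $a\neq 0$, the factorization in the proof of Lemma \ref{lem:non-invertible-form} shows that $M$ is invertible if and only if $d-ca^{-1}b\neq 0$.

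\textbf{First form.} Suppose $A=\begin{pmatrix}a_1&a_2\\ta_1&ta_2\end{pmatrix}$. Then
\[
A-\lambda I_2=\begin{pmatrix}a_1-\lambda&a_2\\ta_1&ta_2-\lambda\end{pmatrix}.
\]
I would restrict $\lambda$ to nonzero real scalars. These are central, which keeps the algebra commutative enough to handle. For $\lambda\neq a_1$ the criterion asks that
\[
ta_2-\lambda-ta_1(a_1-\lambda)^{-1}a_2\neq 0.
\]
Since $\lambda$ is real, $(a_1-\lambda)^{-1}$ commutes with $a_1$ and $\lambda$. Multiplying on the right is not directly available because $a_2$ sits in between, so I would instead consider the quantity
\[
t\bigl[(a_1-\lambda)-a_1\bigr](a_1-\lambda)^{-1}a_2-\lambda .
\]
This equals $-\lambda t(a_1-\lambda)^{-1}a_2-\lambda$, which is zero if and only if
\[
t(a_1-\lambda)^{-1}a_2=-1.
\]
As $\lambda\to\infty$ through the reals, $(a_1-\lambda)^{-1}\to 0$, so the left side tends to $0$ and cannot equal $-1$. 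Hence every sufficiently large real $\lambda$ works.

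\textbf{Second form.} Suppose $A=\begin{pmatrix}0&0\\a_1&a_2\end{pmatrix}$. Then
\[
A-\lambda I_2=\begin{pmatrix}-\lambda&0\\a_1&a_2-\lambda\end{pmatrix}.
\]
This is lower triangular, so it is invertible whenever $\lambda\neq 0$ and $\lambda\neq a_2$. Any real $\lambda$ outside $\{0,a_2\}$ therefore works.

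The main obstacle is keeping the noncommutative manipulation honest in the first form. Restricting to real $\lambda$ resolves it, because the identity $a_1(a_1-\lambda)^{-1}=1+\lambda(a_1-\lambda)^{-1}$ uses only that $\lambda$ is central.
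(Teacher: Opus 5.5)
Your proof is correct, and it takes a different route from the paper, which gives no argument of its own and simply imports the statement from Proposition 5.3.4 of \cite{L14}.

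\textbf{What your route does.} Your argument is self-contained and has three steps.
\begin{enumerate}
\item You reduce to invertibility of $A-\lambda I_2$. Since $(\lambda I_2)v=\lambda v$ is exactly left scalar multiplication, invertibility of $A-\lambda I_2$ rules out $Av=\lambda v$ with $v\neq 0$. Only this easy direction is actually needed.
\item You use the normal forms of Lemma \ref{lem:non-invertible-form} together with the Schur-complement factorization. For real $\lambda\neq 0,a_1$ this reduces the first form to the condition $t(a_1-\lambda)^{-1}a_2\neq -1$. Only the ``if'' half of your criterion is used, and that half is exactly what the factorization gives. The second form is handled by the triangular structure.
\item You finish with the norm identity $|t(a_1-\lambda)^{-1}a_2|=|t||a_2|/|a_1-\lambda|$. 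It shows that every real $\lambda$ with $|\lambda|>|a_1|+|t||a_2|$ works.
\end{enumerate}

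\textbf{What each approach buys.} Your argument gives an explicit $\lambda$ that can be taken real, hence central. That is a mild strengthening of the cited statement, and it avoids any appeal to the theory of left eigenvalues. The citation buys brevity.

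\textbf{A shorter alternative.} The normal form is not actually needed. If $Av=\lambda v$ with $v\neq 0$, then
\[
|\lambda|\,\|v\|=\|Av\|\le \|A\|_F\,\|v\|,
\]
where $\|A\|_F^2=\sum_{i,j}|a_{ij}|^2$. This follows from the triangle and Cauchy--Schwarz inequalities, using multiplicativity of the quaternion norm. Hence any real $\lambda>\|A\|_F$ works. This holds for every $A\in M_2(\mathbb H)$, invertible or not.
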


\section{Images of the power map with constant}\label{sec:image}

In this section, we prove Theorem~\ref{thm:image-phi}. The proof splits into two parts, the
first part is uniform across $\mathbb D\in\{\mathbb C,\mathbb H\}$ and any
positive integer $k$; the same argument
also covers $\mathbb D=\mathbb R$ when $k$ is odd, since in that case every
real number admits a unique real $k$-th root. The second part gives the proof of
remaining case $\mathbb D=\mathbb R$ with $k$ even, where the existence of
real $k$-th roots in $M_2(\mathbb R)$ may fail and a finer analysis is
required.

\begin{proof}[Proof of Theorem \ref{thm:image-phi}:]
It is easy to see that the row dependency ratio is preserved under right multiplication by an
arbitrary matrix in $M_2(\mathbb D)$, that is, if $A$ has the first form
in \eqref{eq:intro-ratio-form} and $X\in M_2(\mathbb K)$ is arbitrary,
then $AX$ has the same first form in \eqref{eq:intro-ratio-form} with the same parameter $t$. The case $\rho(A)=\infty$ is analogous. By iterating, we see that $AX^k$ has
row dependency ratio equal to $\rho(A)$ whenever 
$AX^k\neq 0$. This shows that ${\rm Im}(\phi_{A,k})\subset \{\,M\in M_2(\mathbb{D}) : \rho(M)=\rho(A)\,\}\cup\{\theta\}.$
For the other inclusion, we split the argument according to the underlying field and the parity
of $k$.

\medskip
\noindent\textbf{Part I:} \emph{The cases $\mathbb D\in\{\mathbb C,\mathbb H\}$ with $k$ both even and odd,
and $\mathbb D=\mathbb R$ with $k$ odd.}

Fix $C=\begin{pmatrix}
    p&q\\tp &tq
\end{pmatrix}$ and $A=\begin{pmatrix}
    a_1&a_2\\ta_1&ta_2
\end{pmatrix}$ with $p,q, a_1,a_2\in\mathbb D$.
We exhibit $X\in M_2(\mathbb D)$ such that $AX^k=C$, distinguishing three
cases according to which of $a_1,a_2$ vanish.

\medskip
\noindent\textit{Case 1: $a_1\neq 0$ and $a_2\neq 0$.}\;
By Lemma~\ref{lem:roots-H} (resp.\ its complex analogue, and the fact
that every real number admits a unique real $k$-th root when $k$ is
odd), the elements $a_1^{-1}p$ and $a_2^{-1}q$ admit $k$-th roots in
$\mathbb D$. Choose
$ X=\begin{pmatrix}
      (a_1^{-1}p)^{1/k} & 0\\
      0 & (a_2^{-1}q)^{1/k}
   \end{pmatrix},$
   so that
   $X^k=\begin{pmatrix}
      a_1^{-1}p & 0\\
      0 & a_2^{-1}q
   \end{pmatrix}.$
A direct computation gives
\[
   AX^k=
   \begin{pmatrix} a_1 & a_2\\ ta_1 & ta_2\end{pmatrix}
   \begin{pmatrix} a_1^{-1}p & 0\\ 0 & a_2^{-1}q\end{pmatrix}
   =
   \begin{pmatrix} p & q\\ tp & tq\end{pmatrix}
   = C.
\]

\medskip
\noindent\textit{Case 2: $a_1\neq 0$, $a_2=0$.}\;
Suppose first that $q\neq 0$. Set $M \;=\; \begin{pmatrix} a_1^{-1}p & a_1^{-1}q\\ 1 & 0\end{pmatrix}.$
Then $\rm Det(M)=\left[a_1^{-1}q\right]\neq 0$ in the $\mathbb H$ case, and the
classical determinant yields the same conclusion over $\mathbb C$ and
$\mathbb R$; hence $M$ is invertible. Since $M$ is invertible, it
admits a $k$-th root $X\in M_2(\mathbb D)$ using Lemma~\ref{lem:image-power-map-real} and Lemma~\ref{lem:invertible-kth-root}, and
\[
   AX^k=
   \begin{pmatrix} a_1 & 0\\ ta_1 & 0\end{pmatrix}
   \begin{pmatrix} a_1^{-1}p & a_1^{-1}q\\ 1 & 0\end{pmatrix}
   =
   \begin{pmatrix} p & q\\ tp & tq\end{pmatrix}
   =C.
\]
If $q=0$, then $C=\begin{pmatrix} p & 0\\ tp & 0\end{pmatrix}$,
and we see that for $X=\begin{pmatrix} (a_1^{-1}p)^{1/k} & 0\\ 0 & 0\end{pmatrix}$
we have $AX^k=C$.

\medskip
\noindent\textit{Case 3: $a_1=0$, $a_2\neq 0$.}\;
This case is symmetric to Case~2. If $p\neq 0$, set
$M \;=\; \begin{pmatrix} 0 & 1\\ a_2^{-1}p & a_2^{-1}q\end{pmatrix},$
which is invertible since $\rm Det(M)=\left[a_2^{-1}p\right]\neq 0$; by Lemma~\ref{lem:image-power-map-real} and Lemma~\ref{lem:invertible-kth-root} there exists
$X\in M_2(\mathbb K)$ with $X^k=M$, and a direct computation shows that
$AX^k=C$. If $p=0$, take
$X \;=\; \begin{pmatrix} 0 & 0\\ 0 & (a_2^{-1}q)^{1/k}\end{pmatrix}.$
This exhaustively covers all cases and establishes the reverse inclusion
when $A$ has the first form. The argument when $A$ has the second form
in~\eqref{eq:intro-ratio-form} (i.e.\ $\rho(A)=\infty$) proceeds along
identical lines and is omitted.

\medskip\noindent
\textbf{Part II:} \emph{The case $\mathbb K=\mathbb R$ with $k$ even.}\\
Our aim is to find $X$ such that $AX^k=C$. In view of Lemma~\ref{lem:image-power-map-real}, it is enough to find a matrix $N$ with $Tr(N)\geq 0$, and ${\rm det}(N)\geq 0$, but not simultaneously zero, such that $AN=C$. Indeed, $N$ has $k$-th root, and that serves the purpose for $X$.
Suppose that $X^k=\begin{pmatrix}
    x_1&x_2\\x_3&x_4
\end{pmatrix}$, $A=\begin{pmatrix}
    a_1&a_2\\ta_1&ta_2
\end{pmatrix}$ and $C=\begin{pmatrix}
    p&q\\tp &tq
\end{pmatrix}$ in $AX^k=C$, we get the equations,
\begin{equation}\label{eq:linear-constraints}
   x_1 a_1 + x_3a_2 \;=\; p,
   \qquad
   x_2 a_1 + x_4 a_2 \;=\; q
\end{equation}
We write
$\mathbf{w}=(p, q)^T,
   \mathbf{v}=(a_1, a_2)^T,$
   are column vectors in $\mathbb{R}^2$, and let
$r^2 = a_1^2 + a_2^2 > 0$ when $\mathbf{v} \neq 0$, since $A$ is a non-zero matrix. Then we get
$M\mathbf{v}=\mathbf{w}$ where 
   $M=
   \begin{pmatrix} x_1 & x_3 \\ x_2 & x_4 \end{pmatrix}.$
   Note that the trace and the determinant of $X^k$ and $M$ are the same. Therefore, if we find $M$ satisfying the trace-determinant condition given in Lemma \ref{lem:image-power-map-real}(2b), we are done.

To determine $M$, we take $\mathbf{u}=(-a_2, a_1)^T$ so that $\{\mathbf{v},\mathbf{u}\}$ forms a basis for $\mathbb R^2$ and write $M\mathbf{u}=\mathbf{z}$ for an arbitrary
vector $\mathbf{z}=(z_1,z_2)^T \in \mathbb{R}^2$.
Let $V = (\mathbf{v}\ \mathbf{u})$, which satisfies
$\det V = r^2 > 0$, and we get that
\[
   M =(\mathbf{w}\ \mathbf{z}) V^{-1}
       =\frac{1}{r^2}
       \bigl( a_1 \mathbf{w} - a_2 \mathbf{z},\ a_2 \mathbf{w} + a_1 \mathbf{z} \bigr).
\]
Hence $\operatorname{tr} M=x_1 + x_4
   =
   \frac{\mathbf{w}\cdot\mathbf{v} + \mathbf{z}\cdot\mathbf{u}}{r^2},$ and 
   $\det M = \frac{p z_2 - q z_1}{r^2}r^2.$
The problem is now reduced to choosing $\mathbf{z} \in \mathbb{R}^2$ such
that $T:=\; \mathbf{w}\cdot\mathbf{v} + \mathbf{z}\cdot\mathbf{u} \;\geq\; 0,$ and 
$D:= p z_2 - 
   q z_1 \;\geq\; 0,$
with $T$ and $D$ not simultaneously zero.

We first assume that $\mathbf{v}$ is not collinear with $\mathbf{w}$. Since $\{\mathbf{u}, \mathbf{v}\}$ forms a basis for $\mathbb R^2$, we have $\mathbf{w}\cdot\mathbf{u} \neq 0$. Now, choose $\mathbf{z} = \eta \mathbf{u}
+ \zeta \mathbf{w}$ for parameters $\eta,\zeta \in \mathbb{R}$. A short
computation gives
$D=\eta (\mathbf{w}\cdot\mathbf{v}),$ and
$\mathbf{z}\cdot\mathbf{u} \;=\; \eta r^2 + \zeta (\mathbf{w}\cdot\mathbf{u}).$
Choose $\eta$ so that $D \geq 0$ and then $\zeta$ so that $T > 0$; both
choices are possible because $\mathbf{w}\cdot\mathbf{u} \neq 0$.
Now, assume that $\mathbf{v}$ is collinear with $\mathbf{w}$, that is,
$\mathbf{w} = \lambda \mathbf{v}$ for some $\lambda \in \mathbb{R}$,
and by Lemma \ref{lem:image-power-map-real}(3) there exists $X\in M_2(\mathbb R)$ such that $X^k = \lambda I$, and this implies that 
$A X^k=\lambda A=
   \begin{pmatrix} p & q \\ t p & t q \end{pmatrix}=C,$ as required.
Now, suppose $a_2 = 0$, then $A = \begin{pmatrix} a_1 & 0 \\ t a_1 & 0 \end{pmatrix}$, and the
constraint reduces to $x_1 a_1= p,$ and $x_2 a_1=q.$
Further, if $q \neq 0$, then it implies that $x_2\neq0$. Choose $x_4$ to make $x_1 + x_4 > 0$ and then $x_3$ to
      make $x_1x_4-x_2x_3 \geq 0$. Then $M$ satisfies the
      trace--determinant condition from Lemma~\ref{lem:image-power-map-real}(2).
 If $x_2 = 0$, by Lemma \ref{lem:image-power-map-real}(3) there exists $X\in M_2(\mathbb R)$ such that $X^k = x_1I$, and we have $AX^k=C$.
The case $a_1 = 0$ can be done analogously, and hence we omit the details. 
\end{proof}

\section{Surjectivity of the polynomial map with constant}\label{sec:Surjectivity}
In this section, we prove Theorem \ref{thm-surjectivity}. To prove Theorem \ref{thm-surjectivity}, we will use Theorem \ref{thm:image-phi}. We split this proof into Proposition \ref{prop-both-non-invertible}, Proposition \ref{thm:mixed-case}, and Proposition \ref{thm:doubly-invertible}. 

\begin{proposition}\label{prop-both-non-invertible}
Let $\mathbb D\in\{\mathbb R,\mathbb H\}$, and let $A,B\in M_2(\mathbb D)$ be non-zero non-invertible
matrices with row dependency ratios $t,s\in\mathbb D\cup\{\infty\}$, respectively.
Then the polynomial map $\omega(X,Y)=AX^{k_1}+BY^{k_2}$ is surjective if
and only if $t\neq s$.
\end{proposition}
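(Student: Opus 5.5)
The plan is to reduce everything to Theorem~\ref{thm:image-phi}. By that theorem, ${\rm Im}(\phi_{A,k_1})$ is the set $S_t$ of singular matrices with row dependency ratio $t$, together with $\theta$. Likewise ${\rm Im}(\phi_{B,k_2})=S_s\cup\{\theta\}$. Hence
\[
{\rm Im}(\omega)=(S_t\cup\{\theta\})+(S_s\cup\{\theta\}),
\]
and the question becomes purely one of linear algebra over $\mathbb D$.

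For the direction $t=s\Rightarrow$ not surjective, note that $S_t\cup\{\theta\}$ is exactly the set of matrices whose rows $(r_1,r_2)$ satisfy $r_2=tr_1$ (or $r_1=0$ when $t=\infty$). This set is closed under addition, since $tr_1+tr_1'=t(r_1+r_1')$. So the image is this proper subset, which in particular misses $I$. This also gives the description of the image in Theorem~\ref{thm-surjectivity}.

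For the direction $t\neq s\Rightarrow$ surjective, I would treat $M_2(\mathbb D)$ as pairs of row vectors in $\mathbb D^2$. An element of $S_t\cup\{\theta\}$ is any pair $(x,tx)$ with $x\in\mathbb D^2$ arbitrary; for $t=\infty$ it is $(0,x)$. Given an arbitrary target with rows $(r_1,r_2)$, I seek $x,y\in\mathbb D^2$ with
\[
x+y=r_1,\qquad tx+sy=r_2 .
\]
When both $t,s$ are finite, substituting $y=r_1-x$ gives $(t-s)x=r_2-sr_1$, and since $t-s\neq 0$ is invertible in $\mathbb D$ we get $x=(t-s)^{-1}(r_2-sr_1)$; here left multiplication by scalars on row vectors is the consistent convention. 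When, say, $s=\infty$ and $t$ is finite, the first summand is $(x,tx)$ and the second is $(0,y)$. Then $x=r_1$ and $y=r_2-tr_1$. The case $t=\infty$ is symmetric. Each summand is either $\theta$ or a nonzero singular matrix of the right ratio, and every such matrix lies in the image by Theorem~\ref{thm:image-phi}, so the target is attained.

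The only step needing care is confirming that the image in Theorem~\ref{thm:image-phi} really is the full set $\{(x,tx)\}$, including degenerate matrices such as those with one zero column. That is precisely what the theorem asserts, so it must be applied in this full form. With noncommutativity, I must also keep the ratio acting on the left throughout, matching the form \eqref{eq:intro-ratio-form}. I do not expect any further obstacle.
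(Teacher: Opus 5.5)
Your proposal is correct and follows essentially the same route as the paper's proof. Both arguments use Theorem~\ref{thm:image-phi} to identify each summand's image with the matrices whose second row is $t$ (resp.\ $s$) times the first row, or whose first row vanishes when the ratio is $\infty$. When $t=s$, you note that this set is closed under addition and omits $I_2$; when $t\neq s$, you solve $x+y=r_1$, $tx+sy=r_2$ via $x=(t-s)^{-1}(r_2-sr_1)$, and treat the case of exactly one infinite ratio directly.
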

\begin{proof}
Since both $A$ and $B$ are non-zero, non-invertible, we assume that $A$ and $B$ both take the form
in~\eqref{eq:intro-ratio-form}, with row dependency ratios
$t,s\in\mathbb D$. In view of Theorem~\ref{thm:image-phi}, we write that $A X^{k_1}=
   \begin{pmatrix} x_1 & x_2 \\ tx_1 & tx_2 \end{pmatrix}$ and $B Y^{k_2}=
   \begin{pmatrix} y_1 & y_2 \\ sy_1 & sy_2 \end{pmatrix},$
for some $x_1,x_2,y_1,y_2\in\mathbb D$.

\smallskip
\noindent\textbf{Case 1: $t\neq s$.} Let
$C=\begin{pmatrix} \ell & m \\ n & o \end{pmatrix}\in M_2(\mathbb D)$.
Now $\omega(X,Y)=C$ gives rise to the following system of equations.
\[
   x_1 + y_1 = \ell,\quad
   x_2 + y_2 = m,\quad
   tx_1 + sy_1 = n,\quad
   tx_2 + sy_2 = o.
\]
Since $t\neq s$, the system has a unique
solution, given by
\[
   x_1 = (t-s)^{-1}(n-s\ell),\quad
   x_2 = (t-s)^{-1}(o-sm),
\]
\[
   y_1 = (t-s)^{-1}(t\ell-n),\quad
   y_2 = (t-s)^{-1}(tm-o).
\]
Hence,  $\omega$ is surjective by Theorem~\ref{thm:image-phi}.

\smallskip
\noindent\textbf{Case 2: $t=s$.} Then for every $X,Y\in M_2(\mathbb D)$ we have
\[
   \omega(X,Y)
   \;=\;
   \begin{pmatrix}
      x_1+y_1 & x_2+y_2 \\
      t(x_1+y_1) & t(x_2+y_2)
   \end{pmatrix},
\]
so every element of ${\rm Im}(\omega)$ has row dependency ratio $t$. In
particular, the identity matrix $I_2$ does not lie in ${\rm Im}(\omega)$, so
$\omega$ is not surjective.

 If both $A$ and
$B$ have second form in~\eqref{eq:intro-ratio-form} (so $t=s=\infty$),
then by Theorem~\ref{thm:image-phi} every element of ${\rm Im}(\omega)$ has
zero first row, and again $I_2\notin{\rm Im}(\omega)$. If exactly one of
$A,B$ has the second form in~\eqref{eq:intro-ratio-form} (say $B$, so
$s=\infty$ and $t\in\mathbb D$), then a direct adaptation of Case~1 above shows
that $\omega$ is surjective. Indeed, given
$C=\begin{pmatrix} \ell & m \\ n & o \end{pmatrix}$, we choose
$X^{k_1}$ so that $AX^{k_1}=\begin{pmatrix} \ell & m \\ t\ell & tm \end{pmatrix}$
(possible by Theorem~\ref{thm:image-phi}) and then $BY^{k_2}=C-AX^{k_1}$,
which has zero first row and is therefore in ${\rm Im}(\phi_{B,k_2})$.
\end{proof}

\begin{proposition}\label{thm:mixed-case}
Let $\mathbb D\in\{\mathbb R,\mathbb H\}$, and let $A,B\in M_2(\mathbb D)$ be non-zero matrices with exactly one of them invertible. Then for all integers $k_1,k_2\ge 1$ the polynomial map with constants $\omega(X,Y) \;=\; AX^{k_1}+BY^{k_2}$
is surjective on $M_2(\mathbb D)$.
\end{proposition}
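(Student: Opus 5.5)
We may assume without loss of generality that $A$ is invertible and $B$ is non-zero and non-invertible; the other case follows by exchanging the roles of $(A,k_1)$ and $(B,k_2)$. Fix $C\in M_2(\mathbb D)$. We look for $N\in{\rm Im}(P_{k_1})$ such that $M:=C-AN$ lies in ${\rm Im}(\phi_{B,k_2})$. Given such an $N$, take $X$ with $X^{k_1}=N$, and take $Y$ with $BY^{k_2}=M$ (which exists by Theorem~\ref{thm:image-phi}). Then $\omega(X,Y)=AN+M=C$.

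By Theorem~\ref{thm:image-phi}, $M\in{\rm Im}(\phi_{B,k_2})$ exactly when $M=\theta$ or $\rho(M)=\rho(B)$.
\begin{itemize}
\item If $\rho(B)=s\in\mathbb D$, this condition says that the row vector $(-s,\,1)M$ vanishes.
\item If $\rho(B)=\infty$, it says that $(1,\,0)M$ vanishes.
\end{itemize}
Call this fixed nonzero row vector $e$. The requirement becomes $eAN=eC$. Since $A$ is invertible, $u:=eA$ is a nonzero row vector. Set $r:=eC$. So we must solve the single linear condition $uN=r$ with $N$ in the image of the $k_1$-th power map.

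If $r=0$, we take $N=\theta=\theta^{k_1}$; then $M=C$ already lies in ${\rm Im}(\phi_{B,k_2})$. If $r\neq0$, we extend $u$ and $r$ to bases of the row space $\mathbb D^2$ (as a left $\mathbb D$-module), which produces an invertible $N$ with $uN=r$.

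For $\mathbb D=\mathbb H$, Lemma~\ref{lem:invertible-kth-root} then gives a $k_1$-th root of $N$, and we are done. For $\mathbb D=\mathbb R$ with $k_1$ odd, Lemma~\ref{lem:image-power-map-real}(1) suffices, since an invertible $N$ is not nilpotent.

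The main obstacle is $\mathbb D=\mathbb R$ with $k_1$ even, where invertibility is not enough. Here the solutions of $uN=r$ form the affine family
\[
N=N_0+\mathbf{p}\,\mathbf{z},
\]
where $N_0$ is one particular solution, $\mathbf{p}$ is a column vector spanning $\ker u$, and $\mathbf{z}\in\mathbb R^2$ is an arbitrary row vector. Both $\operatorname{tr}N$ and $\det N$ are affine functions of $\mathbf{z}$. The plan is to mimic Part~II of the proof of Theorem~\ref{thm:image-phi}: choose $\mathbf{z}$ so that $\operatorname{tr}N\ge0$ and $\det N\ge0$, not both zero, and then apply Lemma~\ref{lem:image-power-map-real}(2b).

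To do this, we conjugate by an invertible real matrix so that $u=(1,0)$ and $\mathbf p=(0,1)^T$. Writing $r=(r_1,r_2)$, the family becomes
\[
N=\begin{pmatrix} r_1 & r_2\\ z_1 & z_2\end{pmatrix},
\]
with $\operatorname{tr}N=r_1+z_2$ and $\det N=r_1z_2-r_2z_1$.
\begin{itemize}
\item If $r_2\neq0$, we first pick $z_2$ large so that the trace is positive, and then pick $z_1$ so that the determinant is positive.
\item If $r_2=0$ and $r_1\neq0$, we take $z_1=0$ and $z_2=r_1$. This gives $N=r_1I$, which has a $k_1$-th root by Lemma~\ref{lem:image-power-map-real}(3).
\end{itemize}
This covers every $r\neq0$, and completes the proof.
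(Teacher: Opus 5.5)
Your proof is correct. Its core matches the paper's: by Theorem~\ref{thm:image-phi}, ${\rm Im}(\phi_{B,k_2})$ is cut out by a single linear condition, so everything reduces to finding a $k_1$-th power $N$ in the affine set $\{N : uN=r\}$. The paper's real case is exactly this. After replacing $B$ by $A^{-1}B$, its eliminated equations read $(t,-1)X^{k_1}=(t,-1)C$, and it settles them by the same trace--determinant analysis through Lemma~\ref{lem:image-power-map-real}(2b),(3).

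You genuinely differ in the quaternionic case (and the odd real case). The paper splits on whether $C$ is invertible and takes $Y=0$ if it is. Otherwise it chooses $BY^{k_2}$ explicitly in several sub-cases so that $C-BY^{k_2}$ is either $0$ or has nonzero Dieudonn\'e determinant. It also leaves the cases $\rho(B)=\infty$ and ``$C$ with zero first row'' as similar.

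You replace all of this with one observation. If $r=0$, take $N=\theta$. If $r\neq 0$, extend $u$ and $r$ to bases of the left $\mathbb D$-module $\mathbb D^{2}$, stack them as the rows of invertible matrices $U$ and $R$, and set $N=U^{-1}R$; this is invertible and satisfies $uN=r$. This avoids the Dieudonn\'e determinant entirely. It handles $\rho(B)\in\mathbb D$, $\rho(B)=\infty$, and every shape of $C$ uniformly. It also shows that the only input needed over $\mathbb H$ is that invertible matrices are $k_1$-th powers (Lemma~\ref{lem:invertible-kth-root}).

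Similarly, normalizing $u=(1,0)$ by a real conjugation makes your even real case self-contained. It is shorter than the paper's appeal to Part~II of the proof of Theorem~\ref{thm:image-phi}. What the paper's route offers in return is fully explicit matrices for $X^{k_1}$ and $BY^{k_2}$ in each sub-case.
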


\begin{proof}
Without loss of generality, we assume that $A$ is invertible and $B$ is non-zero, non-invertible. Multiplying the equation $\omega(X,Y)=C$ on the left by $A^{-1}$, so it suffices to prove that the equation
\begin{equation}\label{eq:reduced-mixed}
   X^{k_1}+B'Y^{k_2} \;=\; C
\end{equation}
admits a solution for every $C\in M_2(\mathbb D)$, where $B'=A^{-1}B$ is again non-zero non-invertible. We rename $B'$ to $B$ to lighten the notation.

\medskip
\noindent\textbf{Part I:} \emph{The cases $\mathbb D=\mathbb H$.}

\smallskip
\noindent\textit{Case 1: $C$ is invertible.} By Lemma~\ref{lem:invertible-kth-root} there exists $X\in M_2(\mathbb D)$ with $X^{k_1}=C$. Taking $Y=0$ solves~\eqref{eq:reduced-mixed}.

\smallskip
\noindent\textit{Case 2: $C$ is non-invertible.} By Lemma~\ref{lem:non-invertible-form}, $C$ has the form $\begin{pmatrix} p & q \\ sp & sq\end{pmatrix}$ for some $s\in\mathbb D$ (the case where the first row of $C$ is zero is treated symmetrically); the case $p=q=0$ is trivial. Without loss of generality $q\neq 0$. By Theorem~\ref{thm:image-phi}, $BY^{k_2}$ takes the form
$BY^{k_2} \;=\; \begin{pmatrix} \beta_1 & \beta_2 \\ t\beta_1 & t\beta_2\end{pmatrix}$
for $\beta_1,\beta_2\in\mathbb D$, and $t$ is the row dependency ratio of $B$. We split into sub-cases.

\smallskip
\noindent\textit{Sub-case 2(a): $t=s$.} Set $\beta_1=p$, $\beta_2=q$. Then $C-BY^{k_2}=0$, and $X=0$ solves~\eqref{eq:reduced-mixed}.

\smallskip
\noindent\textit{Sub-case 2(b): $t\neq s$, $p\neq0$.} Set $\beta_1=p$, $\beta_2=q/2$. Then
\[
   C-BY^{k_2} \;=\;
   \begin{pmatrix} 0 & q/2 \\ (s-t)p & (s-t/2)q\end{pmatrix}.
\]
The Dieudonn\'e determinant of this matrix is $\left[q/2(s-t)p\right]\neq 0$, so the matrix is invertible, and Lemma \ref{lem:invertible-kth-root} yields $X\in M_2(\mathbb D)$ with $X^{k_1}=C-BY^{k_2}$. 

\smallskip
\noindent\textit{Sub-case 2(c): $t\neq s$, $p=0$.} Set $\beta_1=-1$, $\beta_2=q$. Then
\[
   C-BY^{k_2} \;=\;
   \begin{pmatrix} 1 & 0 \\ t & (s-t)q\end{pmatrix}.
\]
The Dieudonn\'e determinant of this matrix is $\left[(s-t)q\right]\neq 0$, so the matrix is invertible, and Lemma \ref{lem:invertible-kth-root} yields $X\in M_2(\mathbb D)$ with $X^{k_1}=C-BY^{k_2}$. 

The case when the first row of $C$ is zero, and the case where the row dependency ratio of $B$ equals $\infty$, can be done in a similar way, and hence we omit the details.

\medskip
\noindent\textbf{Part II:} \emph{The case $\mathbb D=\mathbb R$.}

Since $B$ is non zero, non invertible, by Theorem~\ref{thm:image-phi}, we can write $BY^{k_2}=
   \begin{pmatrix} \beta_1 & \beta_2 \\ t\beta_1 & t\beta_2\end{pmatrix},$
for $\beta_1,\beta_2\in\mathbb R$. Set $X^{k_1}=\begin{pmatrix} x_1 & x_2 \\ x_3 & x_4\end{pmatrix}$ and let $C=\begin{pmatrix} p & q \\ r & s\end{pmatrix}\in M_2(\mathbb R)$ be arbitrary. Then $X^{k_1}+BY^{k_2}=C$ gives
\begin{equation}\label{eq:linear-constraint-mixed-R_0}
   x_1+\beta_1=p,\qquad x_2+\beta_2=q,\qquad x_3+t\beta_1=r,\qquad x_4+t\beta_2=s.
\end{equation}
Eliminating $\beta_1$ and $\beta_2$, we have 
\begin{equation}\label{eq:linear-constraint-mixed-R}
   tx_1-x_3 \;=\; tp-r,\qquad tx_2-x_4 \;=\; tq-s.
\end{equation}
Suppose that $k_1$ is even.
We now proceed as in the proof of Theorem~\ref{thm:image-phi}(II), in \eqref{eq:linear-constraints} with the substitutions
\[
   (a_1,a_2) \;\longleftrightarrow\; (t,-1),
   \qquad
   (p,q) \;\longleftrightarrow\; (tp-r,\,tq-s).
\]
Since the substituted vector $(t,-1)$ is non-zero, there exists $X^{k_1}$ satisfying~\eqref{eq:linear-constraint-mixed-R} with $\operatorname{tr}(X^{k_1})>0$ and $\det(X^{k_1})\ge 0$. By Lemma \ref{lem:image-power-map-real}(2), such a matrix lies in $X^{k_1}\in\operatorname{Im}(P_{k_1})$, and hence we are done. The proof is similar when $k_1$ is odd. Once we get $X^{k_1}$, the parameters $\beta_1,\beta_2$ are determined by the equations \eqref{eq:linear-constraint-mixed-R}, and hence in view of Theorem~\ref{thm:image-phi}, there exists $Y\in M_2(\mathbb R)$ such that $BY^{k_2}=
   \begin{pmatrix} \beta_1 & \beta_2 \\ t\beta_1 & t\beta_2\end{pmatrix}.$ Now, it is clear that for the choice of $X, Y\in M_2(\mathbb R)$, obtained above, we get $X^{k_1}+BY^{k_2}=C$. This proves Part II.
\end{proof}

\begin{proposition}\label{thm:doubly-invertible}
Let $\mathbb D\in\{\mathbb R, \mathbb H\}$, and let $A,B\in M_2(\mathbb D)$ both be invertible. Then for all integers $k_1,k_2\ge 1$ the polynomial map with constants
$\omega(X,Y) \;=\; AX^{k_1}+BY^{k_2}$
is surjective on $M_2(\mathbb D)$.
\end{proposition}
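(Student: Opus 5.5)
Multiplying $\omega(X,Y)=C$ on the left by $A^{-1}$ reduces the problem to showing that
\[
X^{k_1}+B'Y^{k_2}=C
\]
is solvable for every $C\in M_2(\mathbb D)$, where $B'=A^{-1}B$ is invertible. It therefore suffices to exhibit, for each $C$, an element $Y$ such that $C-B'Y^{k_2}$ lies in ${\rm Im}(P_{k_1})$.

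\textbf{The case $\mathbb D=\mathbb H$.}
By Lemma~\ref{lem:invertible-kth-root}, every invertible matrix is a $k_1$-th power, so it is enough to find $Y$ with $C-B'Y^{k_2}$ invertible.
\begin{itemize}
\item If $C$ is invertible, take $Y=0$.
\item If $C$ is non-invertible, restrict to $Y=\mu I_2$ with $\mu\in\mathbb R$. Then $Y^{k_2}=\mu^{k_2}I_2$, and we need $C-\lambda B'$ invertible for some real $\lambda$ of the form $\mu^{k_2}$.
\item Equivalently, $B'^{-1}C-\lambda I_2$ must be invertible. Since $\lambda$ is real and central, this fails only when $\lambda$ is a right eigenvalue of $B'^{-1}C$ (in the sense of the similarity class of Lemma~\ref{lem:jordan-H}).
\item Lemma~\ref{lem:jordan-H} shows $B'^{-1}C$ is similar to a triangular complex matrix with at most two diagonal entries. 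So only finitely many real $\lambda$ are excluded.
\item Choose $\lambda>0$ avoiding them and set $\mu=\lambda^{1/k_2}$. Then $C-\lambda B'$ is invertible, and Lemma~\ref{lem:invertible-kth-root} gives $X$.
\end{itemize}
Alternatively one could invoke Lemma~\ref{lem:translate-to-invertibility}, but that lemma produces a quaternionic $\lambda$, which might not be a $k_2$-th power of a scalar matrix in the right way. Real $\lambda$ is cleaner.

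\textbf{The case $\mathbb D=\mathbb R$, $k_1$ odd.}
By Lemma~\ref{lem:image-power-map-real}(1) we only need $C-\lambda B'$ to be non-nilpotent. The polynomial $\det(C-\lambda B')=\det(B')\lambda^2+\cdots$ is nonzero, since its leading coefficient is $\det B'\neq0$. Hence it vanishes for at most two values of $\lambda$. Pick $\lambda>0$ avoiding them; then $C-\lambda B'$ is invertible, hence not nilpotent. Take $Y=\lambda^{1/k_2}I_2$.

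\textbf{The case $\mathbb D=\mathbb R$, $k_1$ even.}
This is the main obstacle, because invertibility no longer suffices. Using Lemma~\ref{lem:image-power-map-real}(2), I aim to make $N=C-\lambda B'$ have either
\begin{itemize}
\item non-real eigenvalues, or
\item $\operatorname{tr}N\ge0$ and $\det N\ge0$, not both zero.
\end{itemize}
I keep $Y=\nu I_2$, so the scalar $\lambda=\nu^{k_2}$ is positive if $k_2$ is even and arbitrary if $k_2$ is odd. Then
\[
\operatorname{tr}N=\operatorname{tr}C-\lambda\operatorname{tr}B', \qquad \det N=\det B'\lambda^2+O(\lambda).
\]
\begin{itemize}
\item If $\det B'>0$ and $\operatorname{tr}B'<0$: take $\lambda\to+\infty$, giving $\det N>0$ and $\operatorname{tr}N>0$.
\item If $\det B'>0$ and $\operatorname{tr}B'\ge0$: use the negative direction when $k_2$ is odd. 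Otherwise $\lambda>0$ is forced, and I would switch to the discriminant $(\operatorname{tr}N)^2-4\det N$.
\end{itemize}
Because the scalar family alone may be insufficient, I expect to use a non-scalar $Y$ in the remaining cases. For instance, choose $Y$ so that $Y^{k_2}=\nu R$ with $R$ a rotation, or with $R=B'^{-1}S$ where $S$ has non-real eigenvalues. Such a $Y$ exists by Lemma~\ref{lem:image-power-map-real}(2a) when $R$ has non-real eigenvalues. Then $C-\nu B'R$ is dominated for large $\nu$ by $\nu B'R$, and I would choose $R$ so that $B'R$ has non-real eigenvalues or positive trace and determinant. An open condition at the dominant term persists for large $\nu$, so $N\in{\rm Im}(P_{k_1})$. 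Arranging this choice of $R$ is the step requiring care; I would verify it by explicit $2\times2$ computation, since rotations $R$ give $\det(B'R)=\det B'$ while the trace varies freely.
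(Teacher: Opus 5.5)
Your $\mathbb H$ argument and your real argument for odd $k_1$ are correct. They are also simpler than the paper's: the paper conjugates $B'$ to Jordan form and uses Lemma~\ref{lem:translate-to-invertibility} to arrange $JY^{k_2}=aI_2$, whereas you take $Y$ a real scalar and note that $B'^{-1}C-\lambda I_2$ is singular for at most two real $\lambda$.

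The real case with $k_1$ even is left unfinished, and the route you sketch cannot be completed in one subcase. Two smaller points first. $C-\nu B'R$ is dominated by $-\nu B'R$, not $\nu B'R$. And $\lambda>0$ is not forced for even $k_2$, since $\lambda I_2\in\operatorname{Im}(P_{k_2})$ for every real $\lambda$ by Lemma~\ref{lem:image-power-map-real}(3). With that, $\det B'>0$ is immediate: take $Y^{k_2}=\lambda I_2$ with $|\lambda|\to\infty$ and $\lambda\operatorname{tr}B'\le 0$. Then $\det(C-\lambda B')\sim\lambda^2\det B'>0$, and either the trace tends to $+\infty$ or, if $\operatorname{tr}B'=0$, the discriminant becomes negative. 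This is essentially the paper's (D1), (D2.a), (D3), (D4).

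\textbf{The gap: $\det B'<0$ with $k_1,k_2$ both even} (the paper's (D2.b)).
\begin{itemize}
\item Every $W\in\operatorname{Im}(P_{k_2})$ has $\det W=(\det Y)^{k_2}\ge 0$, so $\det(-B'R)=\det B'\det R\le 0$ for any admissible $R$.
\item Your leading-order argument needs $-B'R$ in an open subset of $\operatorname{Im}(P_{k_1})\subseteq\{\det\ge 0\}$. Such a set lies in $\{\det>0\}$, and both regions you name (non-real eigenvalues, or positive trace and determinant) have positive determinant. So no choice of $R$ works.
\item Scalar $Y$ fails as well, since $\det(C-\lambda B')\sim\lambda^2\det B'<0$.
\item Your concrete candidate $R=B'^{-1}S$, with $S$ having non-real eigenvalues, illustrates this. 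Here $\det R=\det S/\det B'<0$, so $R$ has real eigenvalues of opposite signs. Lemma~\ref{lem:image-power-map-real}(2a) does not apply, and $\nu R$ is not a $k_2$-th power at all.
\item For odd $k_2$ you could instead take $Y^{k_2}=-\nu B'^{-1}$ and $X^{k_1}=C+\nu I_2$. The obstruction is specific to even $k_2$.
\end{itemize}

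\textbf{The missing idea.} You need to make $\det(C-B'W)$ positive through the cross terms between $C$ and $B'W$, which are linear in $W$, rather than through $\det(B'W)$. The paper does this after diagonalizing $B'=\diag(\lambda,\mu)$ with $\lambda>0>\mu$. For $q>0$ it takes $Y^{k_2}=\begin{pmatrix}0&q/(2\lambda)\\-N&M\end{pmatrix}$, which has positive trace and determinant. This gives $\det X^{k_1}=ps-\tfrac{qr}{2}+pM|\mu|+\tfrac{q}{2}N|\mu|$ and $\operatorname{tr}X^{k_1}=p+s+M|\mu|$, both positive for $N\gg M\gg 0$. Because such choices depend on the entries of $C$, a case split on the signs of $q$ and $r$ is needed. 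The diagonal case $q=r=0$ is handled separately by scalar and rank-one choices of $X^{k_1}$ and $Y^{k_2}$.
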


\begin{proof}
Multiplying $\omega(X,Y)=C$ on the left by $A^{-1}$ reduces the problem to
\begin{equation}\label{eq:reduced-doubly}
   X^{k_1}+B'Y^{k_2} \;=\; C,\qquad C\in M_2(\mathbb D),
\end{equation}
where $B'=A^{-1}B\in M_2(\mathbb D)$ is again invertible. We rename $B'$ to $B$.

\medskip
\noindent\textbf{Part I:} \emph{The cases $\mathbb D=\mathbb H$.}

By Lemma~\ref{lem:jordan-H}, choose an invertible $P\in M_2(\mathbb D)$ such that $J:=P^{-1}BP$ takes one of the three canonical forms
\[
   \lambda I_2\ (\lambda\neq 0),
   \qquad
   \begin{pmatrix} \lambda & 0\\ 0 & \mu\end{pmatrix}\ (\lambda,\mu\neq 0,\ \lambda\neq\mu),
   \qquad
   \begin{pmatrix} \lambda & 1\\ 0 & \lambda\end{pmatrix}\ (\lambda\neq 0).
\]
Conjugating~\eqref{eq:reduced-doubly} by $P$ and renaming variables, it suffices to solve
\begin{equation}\label{eq:JordanForm-doubly}
   X^{k_1}+JY^{k_2} \;=\; C
\end{equation}
for arbitrary $C\in M_2(\mathbb D)$, with $J$ one of the three forms above. 

\smallskip
\noindent\textit{Case I.1: $J=\lambda I_2$ with $\lambda\neq 0$.} The equation becomes $X^{k_1}+\lambda Y^{k_2}=C$, which is surjective on $M_2(\mathbb D)$ by \cite[Theorem 5.2]{SPA25}.

\smallskip
\noindent\textit{Case I.2: $J=\begin{pmatrix}\lambda & 0\\ 0 & \mu\end{pmatrix}$ with $\lambda,\mu\neq 0$ and $\lambda\neq\mu$.} If $C$ is invertible, take $Y=0$ and apply Lemma~\ref{lem:invertible-kth-root} to find $X$ with $X^{k_1}=C$. If $C$ is non-invertible, then by Lemma~\ref{lem:translate-to-invertibility} there exists $a\in\mathbb D$ such that $C-aI_2$ is invertible. Choose $\alpha,\beta\in\mathbb D$ with
$\lambda\alpha=\mu\beta= a,$
and set $Y^{k_2}=\begin{pmatrix}\alpha & 0\\ 0 & \beta\end{pmatrix}$. A direct computation gives
$JY^{k_2} \;=\;
   \begin{pmatrix}\lambda\alpha & 0\\ 0 & \mu\beta\end{pmatrix}
   =aI_2.$
Hence $X^{k_1}=C-aI_2$ is invertible, and Lemma \ref{lem:invertible-kth-root} produces $X$.

\smallskip
\noindent\textit{Case I.3: $J=\begin{pmatrix}\lambda & 1\\ 0 & \lambda\end{pmatrix}$.} Since $J$ is invertible, $\lambda\neq 0$. If $C$ is invertible, take $Y=0$ and apply Lemma~\ref{lem:invertible-kth-root}. If $C$ is non-invertible, choose $a\in\mathbb D$ with $a\neq 0$ such that $C-aI_2$ is invertible (Lemma~\ref{lem:translate-to-invertibility}). Set $\lambda\alpha=a$, and define the invertible matrix
$Y^{k_2} \;=\;
   \begin{pmatrix}\alpha & -\lambda^{-1}\alpha\\ 0 & \alpha\end{pmatrix}.$
A direct computation gives
\[JY^{k_2} \;=\;
   \begin{pmatrix}\lambda & 1\\ 0 & \lambda\end{pmatrix}
   \begin{pmatrix}\alpha & -\lambda^{-1}\alpha\\ 0 & \alpha\end{pmatrix}
   \;=\;
   \begin{pmatrix}\lambda\alpha & 0\\ 0 & \lambda\alpha\end{pmatrix}
   \;=\; aI_2,
\]
so $X^{k_1}=C-aI_2$ is invertible, and the conclusion follows from Lemma~\ref{lem:invertible-kth-root}.

\medskip
\noindent\textbf{Part II:} \emph{The case $\mathbb D=\mathbb R$.}

Let $D=P^{-1}BP$, where $P\in\GL_2(\mathbb R)$. Then  $D$ falls into four mutually exclusive and exhaustive cases:
\begin{itemize}
   \item[\textbf{(D1)}] $D=\lambda I$ with $\lambda\neq 0$;
   \item[\textbf{(D2)}] $D=\diag(\lambda,\mu)$ with $\lambda\neq\mu$ and $\lambda\mu\neq 0$;
   \item[\textbf{(D3)}] $D=\begin{pmatrix}\lambda & 1\\ 0 & \lambda\end{pmatrix}$ with $\lambda\neq 0$;
   \item[\textbf{(D4)}] $D=\begin{pmatrix} a & -b\\ b & a\end{pmatrix}$ with $b\neq 0$.
\end{itemize}

Conjugating  of~\eqref{eq:reduced-doubly} by $P$, the problem is equivalent to solving $X^{k_1}+DY^{k_2} \;=\; C$
for arbitrary $C=\begin{pmatrix} p & q \\ r & s\end{pmatrix}\in M_2(\mathbb R)$. We now consider each of the above cases separately. In every case constructed below, the positive real numbers $M$ and $N$ are free parameters, which may be chosen arbitrarily large.

\smallskip
\noindent\textit{Case (D1):} Set
$X^{k_1} \;=\;
   \begin{pmatrix} M+p & q\\ r & M+s\end{pmatrix},$ and $
   Y^{k_2} \;=\;
   \begin{pmatrix} -M/\lambda & 0\\ 0 & -M/\lambda\end{pmatrix}.$
Then $DY^{k_2}=-MI$, so
$X^{k_1}+DY^{k_2}=
   \begin{pmatrix} p & q\\ r & s\end{pmatrix}=C.$
For $M$ sufficiently large,
$\operatorname{tr}(X^{k_1})=2M+p+s>0,$ and $\det(X^{k_1})= (M+p)(M+s)-qr>0,$
so by Lemma~\ref{lem:image-power-map-real} the matrix $X^{k_1}$ lies in $\operatorname{Im}(P_{k_1})$. The matrix $Y^{k_2}$ is scalar and hence in $\operatorname{Im}(P_{k_2})$.

\smallskip
\noindent\textit{Case (D2):} We split this case into two sub-cases according to the signs of the eigenvalues.

\noindent\emph{Sub-case (D2.a): $\lambda$ and $\mu$ have the same sign.} Choose
$X^{k_1}=
   \begin{pmatrix} M+p & q\\ r & \tfrac{\mu}{\lambda}M+s\end{pmatrix},$ and $Y^{k_2}=
   \begin{pmatrix} -M/\lambda & 0\\ 0 & -M/\lambda\end{pmatrix}.$
Then $DY^{k_2}=
   \begin{pmatrix} -M & 0\\ 0 & -\tfrac{\mu}{\lambda}M\end{pmatrix},$
and hence $X^{k_1}+DY^{k_2}=C$. For large $M$, both $\operatorname{tr}(X^{k_1})$ and $\det(X^{k_1})$ are positive (since $\mu/\lambda>0$), so by Lemma~\ref{lem:image-power-map-real} $X^{k_1}$ lies in $\operatorname{Im}(P_{k_1})$. The matrix $Y^{k_2}$ is scalar and hence in $\operatorname{Im}(P_{k_2})$.

\noindent\emph{Sub-case (D2.b): $\lambda$ and $\mu$ have opposite signs.} Without loss of generality $\lambda>0$ and $\mu<0$. The construction depends on the entry pattern of $C$. In each sub-case below, $M,N>0$ are taken sufficiently large that the trace and determinant of the constructed matrix $X^{k_1}$ are positive.
\begin{itemize}
\item[(i)] Suppose that $q>0$. Take $X^{k_1}=
   \begin{pmatrix} p & q/2\\ r+N\mu & s-M\mu\end{pmatrix}$ and $Y^{k_2}=
   \begin{pmatrix} 0 & q/(2\lambda)\\ -N & M\end{pmatrix}.$
Then $DY^{k_2}=\begin{pmatrix} 0 & q/2\\ -N\mu & M\mu\end{pmatrix}$, and so $X^{k_1}+DY^{k_2}=C$.

\item[(ii)] Suppose that $q<0$. Take $X^{k_1}=
   \begin{pmatrix} p & q/2\\ r-N\mu & s-M\mu\end{pmatrix}$ and $Y^{k_2}=
   \begin{pmatrix} 0 & q/(2\lambda)\\ N & M\end{pmatrix},$ then $X^{k_1}+DY^{k_2}=C$

\item[(iii)] Suppose that $q=0$ and $r>0$. Take $X^{k_1}=
   \begin{pmatrix} p & -N\lambda\\ r/2 & s-M\mu\end{pmatrix},$ and $Y^{k_2}=
   \begin{pmatrix} 0 & N\\ r/(2\mu) & M\end{pmatrix},$ then $X^{k_1}+DY^{k_2}=C.$

\item[(iv)] Suppose that $q=0$ and $r<0$. We take
$X^{k_1}=
   \begin{pmatrix} p & N\lambda\\ r/2 & s-M\mu\end{pmatrix},$ and $Y^{k_2}=
   \begin{pmatrix} 0 & -N\\ r/(2\mu) & M\end{pmatrix},$ then $X^{k_1}+DY^{k_2}=C.$
\item[(v)] Suppose that $q=r=0$ and $p>s$. Take $X^{k_1} =sI,$ and $Y^{k_2}=
   \begin{pmatrix} (p-s)/\lambda & 0\\ 0 & 0\end{pmatrix},$ then $X^{k_1}+DY^{k_2}=C.$

\item[(vi)] Suppose that $q=r=0$ and $s>p$. Take
$X^{k_1} \;=\; \frac{p\mu-\lambda s}{\mu-\lambda}\,I,$ and $
   Y^{k_2} \;=\; \frac{s-p}{\mu-\lambda}\,I.$
A direct computation verifies $X^{k_1}+DY^{k_2}=C$.

\item[(vii)] $q=r=0$ and $p=s$. Take $X^{k_1}=pI$ and $Y^{k_2}=0$. Then trivially $X^{k_1}+DY^{k_2}=pI=C$.
\end{itemize}
In each sub-case a routine computation verifies $X^{k_1}+DY^{k_2}=C$, and the choices of $M$ and $N$ ensure $X^{k_1},Y^{k_2}\in\operatorname{Im}(P_{k_i})$ by Lemma \ref{lem:image-power-map-real}. 

\smallskip
\noindent\textit{Case (D3):} Set
$X^{k_1}=
   \begin{pmatrix} M+p & q+M/\lambda\\ r & s+M\end{pmatrix},$ and $
   Y^{k_2}=
   \begin{pmatrix} -M/\lambda & 0\\ 0 & -M/\lambda\end{pmatrix}.$
A direct computation gives
$DY^{k_2}=
   \begin{pmatrix} -M & -M/\lambda\\ 0 & -M\end{pmatrix},$
so $X^{k_1}+DY^{k_2}=C$. For sufficiently large $M$, $X^{k_1},Y^{k_2}\in\operatorname{Im}(P_{k_i})$ by Lemma \ref{lem:image-power-map-real}.

\smallskip
\noindent\textit{Case (D4):} Set
$Y^{k_2}=D^{-1}(-MI),$ and $X^{k_1}=
   \begin{pmatrix} M+p & q\\ r & M+s\end{pmatrix}.$
Then $DY^{k_2}=-MI$ and $X^{k_1}+DY^{k_2}=C$. For sufficiently large $M$, by Lemma \ref{lem:image-power-map-real} we see that  $X^{k_1}, Y^{k_2}\in \operatorname{Im}(P_{k_2}).$ 
This completes the proof.
\end{proof}

\bibliography{References-2-2-2.bib}
\bibliographystyle{amsalpha}

\end{document}